\definecolor{labelkey}{rgb}{0,0.08,0.45}
\definecolor{rekey}{rgb}{0,0.6,0.0}
\definecolor{Brown}{rgb}{0.45,0.0,0.05}
\DeclareMathOperator{\weakstarly}{\stackrel{\mathrm{w*}}{\rightharpoondown}}
\newcommand{\scal}[2]{\langle{{#1},{#2}}\rangle}
\newcommand{\RR}{\ensuremath{\mathbb R}}
\newcommand{\RX}{\ensuremath{\,\left]-\infty,+\infty\right]}}
\newcommand{\RXX}{\ensuremath{\,\left[-\infty,+\infty\right]}}
\newcommand{\NN}{\ensuremath{\mathbb N}}
\newcommand{\menge}[2]{\big\{{#1} \mid {#2}\big\}}
\newcommand{\To}{\ensuremath{\rightrightarrows}}
\newcommand{\aff}{\operatorname{aff}}
\newcommand{\dom}{\ensuremath{\operatorname{dom}}}
\newcommand{\gra}{\ensuremath{\operatorname{gra}}}
\newcommand{\intdom}{\ensuremath{\operatorname{int}\operatorname{dom}}\,}
\newcommand{\inte}{\ensuremath{\operatorname{int}}}
\newcommand{\bd}{\ensuremath{\operatorname{bdry}}}
\renewcommand{\phi}{\ensuremath{\varphi}}
\newtheorem{theorem}{Theorem}[section]
\newtheorem{fact}[theorem]{Fact}
\newtheorem{corollary}[theorem]{Corollary}
\theoremstyle{plain}{\theorembodyfont{\rmfamily}
}
\theoremstyle{plain}{\theorembodyfont{\rmfamily}
}
\theoremstyle{plain}{\theorembodyfont{\rmfamily}
}
\theoremstyle{plain}{\theorembodyfont{\rmfamily}
\newtheorem{example}[theorem]{Example}}
\theoremstyle{plain}{\theorembodyfont{\rmfamily}
\newtheorem{remark}[theorem]{Remark}}
\newtheorem{problem}[theorem]{Open problem}
\theoremstyle{plain}{\theorembodyfont{\rmfamily}
}
\begin{document}


\title{\sffamily{
The sum of a maximally monotone
 linear relation and\\ the subdifferential of a proper lower semicontinuous\\ convex function is maximally monotone}}

\author{Liangjin\
Yao\thanks{Mathematics, Irving K.\ Barber School, UBC Okanagan,
Kelowna, British Columbia V1V 1V7, Canada.
E-mail:  \texttt{ljinyao@interchange.ubc.ca}.}}
 \vskip 3mm

\date{October 19, 2010}
\maketitle

\begin{abstract} \noindent
The most important open problem in Monotone Operator Theory
concerns the maximal monotonicity of the sum of two
maximally monotone operators provided that
Rockafellar's constraint qualification holds.

In this paper, we prove the maximal monotonicity of $A+\partial f$
provided that
$A$ is a maximally monotone linear relation, and $f$ is a proper lower semicontinuous convex function
 satisfying
$\dom A\cap\inte\dom \partial f\neq\varnothing$. Moreover, $A+\partial f$ is of type (FPV).
The maximal monotonicity of $A+\partial f$ when $\intdom A\cap\dom \partial f\neq\varnothing$
follows from a result by
Verona and Verona, which the present work complements.

\end{abstract}

\noindent {\bfseries 2010 Mathematics Subject Classification:}\\
{Primary  47A06, 47H05;
Secondary
47B65, 47N10,
 90C25}

\noindent {\bfseries Keywords:}
Constraint qualification,
convex function,
convex set,
Fitzpatrick function,
linear relation,
maximally monotone operator,
monotone operator,
monotone operator of type (FPV),
multifunction,
normal cone operator,
Rockafellar's sum theorem,
set-valued operator,
subdifferential operator.

\section{Introduction}

Throughout this paper, we assume that
$X$ is a real Banach space with norm $\|\cdot\|$,
that $X^*$ is the continuous dual of $X$, and
that $X$ and $X^*$ are paired by $\scal{\cdot}{\cdot}$.
Let $A\colon X\To X^*$
be a \emph{set-valued operator} (also known as multifunction)
from $X$ to $X^*$, i.e., for every $x\in X$, $Ax\subseteq X^*$,
and let
$\gra A = \menge{(x,x^*)\in X\times X^*}{x^*\in Ax}$ be
the \emph{graph} of $A$.
Recall that $A$ is  \emph{monotone} if
\begin{equation}
\scal{x-y}{x^*-y^*}\geq 0,\quad \forall (x,x^*)\in \gra A\;\forall (y,y^*)\in\gra A,
\end{equation}
and \emph{maximally monotone} if $A$ is monotone and $A$ has no proper monotone extension
(in the sense of graph inclusion).
Let $A:X\rightrightarrows X^*$ be monotone and $(x,x^*)\in X\times X^*$.
 We say $(x,x^*)$ is \emph{monotonically related to}
$\gra A$ if
\begin{align*}
\langle x-y,x^*-y^*\rangle\geq0,\quad \forall (y,y^*)\in\gra A.\end{align*}
Let $A:X\rightrightarrows X^*$ be maximally monotone. We say $A$ is
\emph{of type (FPV)} if  for every open convex set $U\subseteq X$ such that
$U\cap \dom A\neq\varnothing$, the implication
\begin{equation*}
x\in U\,\text{and}\,(x,x^*)\,\text{is monotonically related to $\gra A\cap (U\times X^*)$}
\Rightarrow (x,x^*)\in\gra A
\end{equation*}
holds.
We say $A$ is a \emph{linear relation} if $\gra A$ is a linear subspace.
Monotone operators have proven to be a key class of objects
in modern Optimization and Analysis; see, e.g.,
the books
\cite{BorVan,BurIus,ButIus,ph,Si,Si2,RockWets,Zalinescu,Zeidler}
and the references therein.
We adopt standard notation used in these books:
$\dom A= \menge{x\in X}{Ax\neq\varnothing}$ is the \emph{domain} of $A$.
Given a subset $C$ of $X$,
$\inte C$ is the \emph{interior} of $C$,
$\bd{C}$ is the \emph{boundary}, $\aff{C}$ is the \emph{affine hull}, and
$\overline{C}$ is the norm \emph{closure} of $C$.
We set $C^{\bot}=
\{x^*\in X^* \mid(\forall c\in C)\, \langle x^*, c\rangle=0\}$
and $S^{\bot}=
\{x^{**}\in X^{**} \mid(\forall s\in S)\, \langle x^{**}, s\rangle=0\}$ for a set  $S\subseteq X^*$.
The \emph{indicator function} of $C$, written as $\iota_C$, is defined
at $x\in X$ by
\begin{align}
\iota_C (x)=\begin{cases}0,\,&\text{if $x\in C$;}\\
\infty,\,&\text{otherwise}.\end{cases}\end{align}
If $D\subseteq X$, we set $C-D=\{x-y\mid x\in C, y\in D\}$.
  For every $x\in X$, the normal cone operator of $C$ at $x$
is defined by $N_C(x)= \menge{x^*\in
X^*}{\sup_{c\in C}\scal{c-x}{x^*}\leq 0}$, if $x\in C$; and $N_C(x)=\varnothing$,
if $x\notin C$.
For $x,y\in X$, we set $\left[x,y\right]=\{tx+(1-t)y\mid 0\leq t\leq 1\}$.
 Given $f\colon X\to \RX$, we set
$\dom f= f^{-1}(\RR)$ and
$f^*\colon X^*\to\RXX\colon x^*\mapsto
\sup_{x\in X}(\scal{x}{x^*}-f(x))$ is
the \emph{Fenchel conjugate} of $f$.
If $f$ is convex and $\dom f\neq\varnothing$, then
   $\partial f\colon X\To X^*\colon
   x\mapsto \menge{x^*\in X^*}{(\forall y\in
X)\; \scal{y-x}{x^*} + f(x)\leq f(y)}$
is the \emph{subdifferential operator} of $f$.
We also set $P_X: X\times X^*\rightarrow X\colon (x,x^*)\mapsto x$.
Finally,  the \emph{open unit ball} in $X$ is denoted by
$U_X= \menge{x\in X}{\|x\|< 1}$, and $\NN=\{1,2,3,\ldots\}$.

Let $A$ and $B$ be maximally monotone operators from $X$ to
$X^*$.
Clearly, the \emph{sum operator} $A+B\colon X\To X^*\colon x\mapsto
Ax+Bx = \menge{a^*+b^*}{a^*\in Ax\;\text{and}\;b^*\in Bx}$
is monotone.
Rockafellar's \cite[Theorem~1]{Rock70} guarantees maximal monotonicity
of $A+B$ under
\emph{Rockafellar's constraint qualification}
$\dom A \cap\intdom B\neq \varnothing$ when $X$ is reflexive
--- this result is often referred to as ``the sum theorem''.
The most famous open problem concerns the maximal monotonicity of $A+B$ in
nonreflexive Banach spaces when Rockafellar's constraint qualification
holds.
See Simons' monograph
\cite{Si2} and \cite{Bor1, Bor2, Bor3, ZalVoi, Yao3}
for a comprehensive account of some recent developments.

Now we focus on the  case when
$A$ is a maximally monotone linear relation,
and $f$ is a proper lower semicontinuous convex function
 such that
$\dom A\cap\inte\dom \partial f\neq\varnothing$. We show that $A+\partial f$ is maximally monotone.
Linear relations have recently become a center of attention in Monotone Operator Theory; see, e.g.,
\cite{BB,BBW,BWY2,BWY3,BWY4,BWY9, BWY7, BWY8, PheSim, Si3,Svaiter,Voisei06b,Voisei06,VZ,Yao}
and Cross' book \cite{Cross} for general background on linear
relations.

The remainder of this paper is organized as follows.
In Section~\ref{s:aux}, we collect auxiliary results for future reference
and for the
reader's convenience.
The main result (Theorem~\ref{PGV:1}) is proved
in Section~\ref{s:main}.

\section{Auxiliary Results}
\label{s:aux}

\begin{fact}[Rockafellar] \label{f:F4}
\emph{(See {\cite[Theorem~3(b)]{Rock66}},
{\cite[Theorem~18.1]{Si2}}, or
{\cite[Theorem~2.8.7(iii)]{Zalinescu}}.)}
Let $f,g: X\rightarrow\RX$ be proper convex functions.
Assume that there exists a point $x_0\in\dom f \cap \dom g$
such that $g$ is continuous at $x_0$.
Then  $\partial (f+g)=\partial f+\partial g$.
\end{fact}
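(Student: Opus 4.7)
The inclusion $\partial f+\partial g\subseteq\partial(f+g)$ is trivial: adding the two subgradient inequalities at a common base point gives the sum inequality. The work lies in the reverse inclusion, and my plan is to reduce it to the Fenchel conjugate identity
\begin{equation*}
(f+g)^*(x^*)\ =\ \min_{y^*\in X^*}\bigl[f^*(x^*-y^*)+g^*(y^*)\bigr],
\end{equation*}
with the infimum \emph{attained}, which is the content of the Moreau--Rockafellar theorem under the stated continuity hypothesis on $g$. Given this identity, the rest of the argument is a few lines of Fenchel--Young bookkeeping.

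Recall that $x^*\in\partial h(x)$ is equivalent to the Young--Fenchel equality $h(x)+h^*(x^*)=\langle x,x^*\rangle$ for any proper convex $h$. Suppose then that $x^*\in\partial(f+g)(x)$, so $(f+g)(x)+(f+g)^*(x^*)=\langle x,x^*\rangle$. Pick $y^*\in X^*$ realizing the infimum in the conjugate identity: $(f+g)^*(x^*)=f^*(x^*-y^*)+g^*(y^*)$. Splitting $\langle x,x^*\rangle=\langle x,x^*-y^*\rangle+\langle x,y^*\rangle$ and using $(f+g)(x)=f(x)+g(x)$, rearrangement yields
\begin{equation*}
\bigl[f(x)+f^*(x^*-y^*)-\langle x,x^*-y^*\rangle\bigr]+\bigl[g(x)+g^*(y^*)-\langle x,y^*\rangle\bigr]=0.
\end{equation*}
Each bracket is nonnegative by Young--Fenchel, so both must vanish. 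The conjugate characterization of the subdifferential then forces $x^*-y^*\in\partial f(x)$ and $y^*\in\partial g(x)$, giving the required decomposition $x^*=(x^*-y^*)+y^*\in\partial f(x)+\partial g(x)$.

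The main obstacle is establishing the conjugate identity with attainment; this is where the hypothesis of Fact~\ref{f:F4} is consumed. I would prove it by a Hahn--Banach separation in $X\times\RR$: the strict epigraph of $f-\langle x^*,\cdot\rangle$ and the set $\{(y,r):r+g(y)\leq-(f+g)^*(x^*)\}$ are disjoint convex subsets, and continuity of $g$ at $x_0\in\dom f$ ensures the latter has nonempty interior, so a continuous separating hyperplane exists and its slope delivers the desired $y^*$. The continuity assumption cannot be weakened to mere finiteness of $g$ at some point of $\dom f$: without local boundedness the separating functional may fail to be continuous, equivalently the dual infimum may fail to be attained, and the decomposition then breaks down.
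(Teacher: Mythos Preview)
The paper does not supply a proof of Fact~\ref{f:F4}; it is stated as a known result with citations to Rockafellar, Simons, and Z\u{a}linescu. Your argument is correct and is precisely the classical route taken in those references: reduce the hard inclusion $\partial(f+g)\subseteq\partial f+\partial g$ to the Moreau--Rockafellar conjugate identity $(f+g)^*=f^*\,\square\,g^*$ with attainment, then read off the decomposition from the Fenchel--Young equalities. Your separation sketch for the conjugate identity is also the standard one; the only point worth making explicit in a polished write-up is that you invoke the identity only at $x^*$ with $(f+g)^*(x^*)<+\infty$ (guaranteed here since $x^*\in\partial(f+g)(x)$), so the hypograph set in your separation argument is indeed nonempty with nonempty interior.
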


\begin{fact}[Rockafellar]\label{SubMR}\emph{(See \cite[Theorem~A]{Rock702},
 \cite[Theorem~3.2.8]{Zalinescu}, \cite[Theorem~18.7]{Si2} or \cite[Theorem~2.1]{MSV})}
 Let $f:X\rightarrow\RX$ be a proper lower semicontinuous convex function.
Then $\partial f$ is maximally monotone.
\end{fact}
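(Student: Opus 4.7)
The plan is to verify monotonicity directly (immediate from the subgradient inequality) and then to prove maximality by a one-dimensional reduction combined with the Br\o nsted--Rockafellar approximation theorem. For monotonicity: given $(x,x^*),(y,y^*)\in\gra\partial f$, adding $f(y)\ge f(x)+\langle y-x,x^*\rangle$ and $f(x)\ge f(y)+\langle x-y,y^*\rangle$ yields $\langle x-y,x^*-y^*\rangle\ge0$.

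For maximality, suppose $(x_0,x_0^*)\in X\times X^*$ is monotonically related to $\gra\partial f$. Replacing $f$ by $f(\cdot+x_0)-\langle\cdot+x_0,x_0^*\rangle$ reduces the problem to $(x_0,x_0^*)=(0,0)$, so the hypothesis reads $\langle y,y^*\rangle\ge0$ for every $(y,y^*)\in\gra\partial f$ and the goal becomes $f(y)\ge f(0)$ for every $y\in X$ (the degenerate case $0\notin\dom f$ is handled separately by exploiting subgradients of large norm along a segment into $\dom f$). Suppose for contradiction that $f(y_0)<f(0)$ for some $y_0\in\dom f$. The one-dimensional convex function $\phi(t):=f(ty_0)$ then satisfies $\phi(0)>\phi(1)$, so its one-sided derivatives are bounded above by some $-\delta<0$ on a nontrivial subinterval $(a,b)\subseteq(0,1)$. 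Fixing $t\in(a,b)$, a Hahn--Banach separation of $\epi f$ produces, for each $\varepsilon>0$, an $\varepsilon$-subgradient $\bar x^*\in\partial_\varepsilon f(ty_0)$ with $\langle y_0,\bar x^*\rangle\le-\delta$; in particular $\langle ty_0,\bar x^*\rangle\le-t\delta$.

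The Br\o nsted--Rockafellar theorem now upgrades $\bar x^*$ to an exact pair $(x,x^*)\in\gra\partial f$ with $\|x-ty_0\|\le\varepsilon/\lambda$ and $\|x^*-\bar x^*\|\le\lambda$ for any prescribed $\lambda>0$. Choosing $\lambda=\sqrt{\varepsilon}$, the discrepancy between $\langle x,x^*\rangle$ and $\langle ty_0,\bar x^*\rangle$ is $O(\sqrt{\varepsilon})$, while the bound $\langle ty_0,\bar x^*\rangle\le-t\delta$ is a fixed negative quantity. Hence $\langle x,x^*\rangle<0$ for all sufficiently small $\varepsilon$, contradicting $\langle y,y^*\rangle\ge0$ on $\gra\partial f$.

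The main obstacle is precisely this quantitative promotion from $\varepsilon$-subgradients to exact subgradients. Approximate subgradients exist at every point of $\dom f$ by Hahn--Banach, but exact subgradients may fail on much of $\dom f$; Br\o nsted--Rockafellar supplies the crucial quantitative density of $\gra\partial f$ inside $\gra\partial_\varepsilon f$, and it is this density result---rather than any subdifferential-specific maximality machinery---that carries the proof in a general (not necessarily reflexive) Banach space.
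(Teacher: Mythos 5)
The paper states this result as a Fact with citations (Rockafellar, Z\u{a}linescu, Simons, Alves--Svaiter) and gives no proof of its own, so your attempt can only be judged on its merits. Your skeleton --- translate so that $(x_0,x_0^*)=(0,0)$, restate the goal as ``$f$ attains its minimum at $0$'', and promote $\varepsilon$-subgradients to exact ones via Br{\o}ndsted--Rockafellar --- is indeed the shape of the classical argument. But the two steps you pass over quickly are exactly where the real difficulty lies, and as written both fail.

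First, the assertion that $\phi(t):=f(ty_0)$ has one-sided derivatives $\leq-\delta<0$ on some nontrivial subinterval of $\left]0,1\right[$ is simply false when $0\notin\dom f$. Take $f=\iota_C$ with $C$ closed convex, $0\notin C$, $y_0\in C$: then $\phi$ equals $0$ on $\{t:ty_0\in C\}=[c,1]$ with $c>0$ and $+\infty$ below, so $\phi$ has no interval of negative slope at all. Your parenthetical remark that this ``degenerate case'' is ``handled separately by exploiting subgradients of large norm'' is not a proof; producing exact subgradients near the entry point $cy_0$ whose pairing with $y_0$ is strictly negative is a genuinely separate argument and one of the two essential ingredients of the theorem.

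Second, and more seriously, the quantitative step is unjustified even in the favourable case $0\in\dom f$. The discrepancy $\langle x,x^*\rangle-\langle ty_0,\bar x^*\rangle$ contains the term $\langle x-ty_0,x^*\rangle$, which Cauchy--Schwarz only bounds by $(\varepsilon/\lambda)\bigl(\|\bar x^*\|+\lambda\bigr)$; with $\lambda=\sqrt{\varepsilon}$ this is $O(\sqrt{\varepsilon})$ only if $\|\bar x^*\|$ stays bounded as $\varepsilon\downarrow0$, and it need not. For $X=\RR^2$, $f(x_1,x_2)=-x_1-\sqrt{x_2}$ for $x_2\geq0$ and $+\infty$ otherwise, and $y_0=(1,0)$, one computes that every $\bar x^*=(u,v)\in\partial_\varepsilon f(t,0)$ has $u=-1$ and $v\leq-1/(4\varepsilon)$, so $\|\bar x^*\|\geq1/(4\varepsilon)$ and your error term is of order $1/\sqrt{\varepsilon}$, not $\sqrt{\varepsilon}$. (Here $\phi(t)=-t$, so this lies squarely in the case your argument claims to cover.) Repairing this requires controlling the cross terms by the subgradient inequalities themselves, or by the function-value estimates built into the Br{\o}ndsted--Rockafellar/Ekeland construction, rather than by norms; that is precisely the content of the proofs in the cited references, so the attempt is missing its essential ingredient rather than merely abbreviating it.
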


\begin{fact}\emph{(See \cite[Theorem~2.28]{ph}.)}
\label{pheps:11}Let $A:X\To X^*$ be  monotone with $\inte\dom A\neq\varnothing$.
Then $A$ is locally bounded at $x\in\inte\dom A$, i.e., there exist $\delta>0$ and $K>0$ such that
\begin{align*}\sup_{y^*\in Ay}\|y^*\|\leq K,\quad \forall y\in (x+\delta U_X)\cap \dom A.
\end{align*}
\end{fact}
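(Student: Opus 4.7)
The plan is to follow the classical argument: construct a lower semicontinuous convex function $\phi$ on $X$ whose finiteness near $x$ is equivalent to local boundedness of $A$, then use the Baire category theorem to show $\phi$ is bounded above on a neighborhood of $x$. After translating, I may assume $x=0$, and choose $r>0$ such that $r\overline{U}_X\subseteq\dom A$. Define
\begin{equation*}
\phi(z)=\sup\bigl\{\langle z-y,y^*\rangle:(y,y^*)\in\gra A,\ y\in r\overline{U}_X\bigr\}.
\end{equation*}
As a supremum of continuous affine functions, $\phi$ is lower semicontinuous and convex.

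The first key step is to verify that $\phi$ is finite on $rU_X$. Given $z\in rU_X\subseteq\dom A$, pick any $z^*\in Az$. Monotonicity applied to $(z,z^*)$ and $(y,y^*)\in\gra A$ with $y\in r\overline{U}_X$ yields $\langle z-y,y^*\rangle\le\langle z-y,z^*\rangle\le 2r\|z^*\|$, so $\phi(z)\le 2r\|z^*\|<\infty$. Next, because each sublevel set $\{z\in rU_X:\phi(z)\le n\}$ is closed in the open (hence Baire) subset $rU_X$ of $X$, and their union is all of $rU_X$, Baire's theorem supplies some $n_0$ for which this sublevel set contains an open ball. Standard convex analysis (propagation of local boundedness of a convex function across its open effective domain) then yields $\delta\in(0,r)$ and $M>0$ with $\phi(z)\le M$ for all $z\in\delta U_X$.

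The final step is to convert the upper bound on $\phi$ into the norm bound on $Ay$. Given $y\in\tfrac{\delta}{2}U_X\cap\dom A$ and $y^*\in Ay$, choose $u\in\overline{U}_X$ with $\langle u,y^*\rangle\ge\tfrac{3}{4}\|y^*\|$, and set $z=y+\tfrac{\delta}{2}u$, so that $z\in\delta U_X$. Then
\begin{equation*}
M\ge\phi(z)\ge\langle z-y,y^*\rangle=\tfrac{\delta}{2}\langle u,y^*\rangle\ge\tfrac{3\delta}{8}\|y^*\|,
\end{equation*}
which gives $\|y^*\|\le 8M/(3\delta)$, establishing the claim with $\delta$ replaced by $\delta/2$ and $K=8M/(3\delta)$.

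The main obstacle is the Baire category step, which hinges on knowing that $\phi$ takes finite values on an open neighborhood of $x$. That in turn is the real content of the proof, and it is delivered by monotonicity combined with the nonemptiness of $Ay$ at each interior point $y\in\dom A$. Once finiteness on an open set is in hand, the rest is bookkeeping: convex-analytic propagation of upper bounds and the norming duality on $X^*$.
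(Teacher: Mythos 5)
Your argument is correct, and it is essentially the classical proof of this result: the paper itself states it as a Fact cited from Phelps (Theorem~2.28) without proof, and the construction you give --- the lower semicontinuous convex function $\phi(z)=\sup\{\langle z-y,y^*\rangle\}$, finiteness on a ball via monotonicity, Baire category plus propagation of upper bounds for convex functions, and the norming-functional step to recover the bound on $\|y^*\|$ --- is precisely the standard route taken in that reference. Nothing further is needed.
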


\begin{fact}[Fitzpatrick]
\emph{(See {\cite[Corollary~3.9]{Fitz88}}.)}
\label{f:Fitz}
Let $A\colon X\To X^*$ be maximally monotone,  and set
\begin{equation}
F_A\colon X\times X^*\to\RX\colon
(x,x^*)\mapsto \sup_{(a,a^*)\in\gra A}
\big(\scal{x}{a^*}+\scal{a}{x^*}-\scal{a}{a^*}\big),
\end{equation}
 the \emph{Fitzpatrick function} associated with $A$.
Then for every $(x,x^*)\in X\times X^*$, the inequality
$\scal{x}{x^*}\leq F_A(x,x^*)$ is true,
and the equality holds if and only if $(x,x^*)\in\gra A$.
\end{fact}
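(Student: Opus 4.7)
The plan is to split the claim into two parts: (a) establish $\scal{x}{x^*}\le F_A(x,x^*)$ for every $(x,x^*)\in X\times X^*$, and (b) characterize when equality holds. In both parts the key identity to keep in mind is the algebraic expansion
\begin{equation*}
\scal{x-a}{x^*-a^*}=\scal{x}{x^*}-\bigl(\scal{x}{a^*}+\scal{a}{x^*}-\scal{a}{a^*}\bigr),
\end{equation*}
which directly links the monotonicity inequality to the quantity being supremized in the definition of $F_A$.

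For the ``easy'' direction of (b), I would start by assuming $(x,x^*)\in\gra A$. Applying monotonicity of $A$ to $(x,x^*)$ and an arbitrary $(a,a^*)\in\gra A$, the identity above yields
\begin{equation*}
\scal{x}{a^*}+\scal{a}{x^*}-\scal{a}{a^*}\le \scal{x}{x^*},
\end{equation*}
and taking the supremum over $\gra A$ gives $F_A(x,x^*)\le\scal{x}{x^*}$. The matching lower bound $F_A(x,x^*)\ge \scal{x}{x^*}$ comes for free by plugging the admissible point $(a,a^*)=(x,x^*)$ into the defining supremum. This simultaneously proves (a) on $\gra A$ and one implication of (b).

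The remaining task is to prove that when $(x,x^*)\notin\gra A$, the inequality in (a) still holds, and in fact is strict (which also settles the other implication of (b)). Here is where maximal monotonicity is essential (a na\"\i ve monotone example like $\gra A=\{(0,0)\}$ shows the statement fails without it). Since $A$ is maximally monotone and $(x,x^*)\notin\gra A$, the pair $(x,x^*)$ cannot be monotonically related to $\gra A$; hence there exists $(a,a^*)\in\gra A$ with $\scal{x-a}{x^*-a^*}<0$. Rearranging via the identity,
\begin{equation*}
\scal{x}{x^*}<\scal{x}{a^*}+\scal{a}{x^*}-\scal{a}{a^*}\le F_A(x,x^*),
\end{equation*}
which delivers both the inequality and its strictness off the graph.

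The proof is essentially algebraic; there is no serious analytic obstacle. The one conceptual point I would emphasize is that maximality is used only in the off-graph case, to convert ``not monotonically related'' into the existence of a witness pair $(a,a^*)$ producing the strict inequality. The monotonicity of $A$ alone handles the on-graph side, and the self-substitution $(a,a^*)=(x,x^*)$ supplies the reverse bound required to collapse the two inequalities into an equality exactly on $\gra A$.
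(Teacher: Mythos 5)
Your proof is correct and complete: the algebraic identity $\scal{x-a}{x^*-a^*}=\scal{x}{x^*}-\bigl(\scal{x}{a^*}+\scal{a}{x^*}-\scal{a}{a^*}\bigr)$ correctly converts monotonicity into the upper bound $F_A\le\pscal$ on $\gra A$, the self-substitution $(a,a^*)=(x,x^*)$ gives the matching lower bound there, and maximality correctly supplies a witness pair yielding strict inequality off the graph. The paper states this result as an imported Fact with a citation to Fitzpatrick and gives no proof of its own, but your argument is exactly the standard one found in the cited sources, so there is nothing to criticize.
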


\begin{fact}
\emph{(See \cite[Theorem~3.4 and Corollary~5.6]{Voi1}, or \cite[Theorem~24.1(b)]{Si2}.)}
\label{f:referee1}
Let $A, B:X\To X^*$ be maximally monotone operators. Assume
$\bigcup_{\lambda>0} \lambda\left[P_X(\dom F_A)-P_X(\dom F_B)\right]$
is a closed subspace.
If
\begin{equation}
F_{A+B}\geq\langle \cdot,\,\cdot\rangle\;\text{on \; $X\times X^*$},
\end{equation}
then $A+B$ is maximally monotone.
\end{fact}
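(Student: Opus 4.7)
The plan is to combine the Fitzpatrick characterization of graphs (Fact~\ref{f:Fitz}) with a Fenchel--Rockafellar duality argument powered by the closed-subspace constraint qualification. Since $A$ and $B$ are monotone, $A+B$ is monotone, and maximality reduces to showing that every $(x_0,x_0^*)\in X\times X^*$ which is monotonically related to $\gra(A+B)$ already lies in $\gra(A+B)$. Unrolling the monotone-relation inequality and taking the supremum over $\gra(A+B)$ yields $F_{A+B}(x_0,x_0^*)\le\pair{x_0}{x_0^*}$, and combining this with the hypothesis $F_{A+B}\ge\pair{\cdot}{\cdot}$ produces the pinning
\begin{equation*}
F_{A+B}(x_0,x_0^*)=\pair{x_0}{x_0^*}. \qquad(\ast)
\end{equation*}

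The natural object on which to pivot is the partial infimal convolution
\begin{equation*}
H(x,x^*):=\inf_{y^*\in X^*}\bigl\{F_A(x,y^*)+F_B(x,x^*-y^*)\bigr\}.
\end{equation*}
Applying Fact~\ref{f:Fitz} to $A$ and to $B$ individually gives $F_A(x,y^*)+F_B(x,x^*-y^*)\ge\pair{x}{y^*}+\pair{x}{x^*-y^*}=\pair{x}{x^*}$, so $H\ge\pair{\cdot}{\cdot}$ pointwise. Moreover, if at some $(x,x^*)$ the equality $H(x,x^*)=\pair{x}{x^*}$ holds with the infimum attained at $y_0^*$, then both Fitzpatrick inequalities collapse to equality, and Fact~\ref{f:Fitz} forces $(x,y_0^*)\in\gra A$ and $(x,x^*-y_0^*)\in\gra B$, hence $(x,x^*)\in\gra(A+B)$. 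The problem therefore reduces to proving $H(x_0,x_0^*)\le\pair{x_0}{x_0^*}$ together with attainment in the infimum defining $H(x_0,x_0^*)$.

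Securing both the upper bound and the attainment is the crux of the proof, and this is where the closed-subspace hypothesis enters. I would view $F_A$ and $F_B$ as proper convex lsc functions on $X\times X^*$ and invoke a Fenchel--Rockafellar duality theorem of Attouch--Brezis type: the closedness of $\bigcup_{\lambda>0}\lambda[P_X(\dom F_A)-P_X(\dom F_B)]$ as a subspace is precisely the non-interior constraint qualification that guarantees the primal--dual equality for the partial convolution $H$ and the existence of a primal minimizer $y_0^*$. Pairing this duality with $(\ast)$ pins $H(x_0,x_0^*)$ down to $\pair{x_0}{x_0^*}$ from above; combined with the attainment it supplies the decomposition $x_0^*=y_0^*+(x_0^*-y_0^*)$ with $(x_0,y_0^*)\in\gra A$ and $(x_0,x_0^*-y_0^*)\in\gra B$, closing the argument.

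The main obstacle is the Fenchel duality with attainment in the last step. Because $X$ need not be reflexive, classical interior-point constraint qualifications on $\dom F_A$ and $\dom F_B$ are unavailable, and the closed-subspace hypothesis is the subtler but correct substitute. Verifying that this hypothesis actually delivers the duality equality \emph{together with} attainment --- and, in parallel, connecting the resulting value of $H(x_0,x_0^*)$ back to $F_{A+B}(x_0,x_0^*)$ via $(\ast)$ so that the sandwich $\pair{x_0}{x_0^*}\le H(x_0,x_0^*)\le\pair{x_0}{x_0^*}$ actually closes --- is the technical heart of the proof, and the reason the result is genuinely useful as a black box for proving sum theorems in the nonreflexive setting.
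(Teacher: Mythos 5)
First, a point of comparison: the paper does not prove this statement at all --- it is quoted as a Fact from Voisei \cite{Voi1} and Simons \cite[Theorem~24.1(b)]{Si2} and used as a black box. So there is no in-paper proof to measure your argument against, and your proposal has to stand on its own.

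Your setup is correct through the reduction to $(\ast)$, the lower bound $H\ge\langle\cdot,\cdot\rangle$ for $H(x,x^*)=\inf_{y^*}\big[F_A(x,y^*)+F_B(x,x^*-y^*)\big]$, and the observation that equality with attainment at $(x_0,x_0^*)$ forces $(x_0,x_0^*)\in\gra(A+B)$. The gap is the claim that Attouch--Brezis duality ``paired with $(\ast)$'' yields the upper bound $H(x_0,x_0^*)\le\langle x_0,x_0^*\rangle$. The inequality between $H$ and $F_{A+B}$ goes the wrong way for this: for any $(a,u^*)\in\gra A$, $(a,v^*)\in\gra B$ and any $y^*$ one has $F_A(x,y^*)+F_B(x,x^*-y^*)\ge\langle x,u^*+v^*\rangle+\langle a,x^*\rangle-\langle a,u^*+v^*\rangle$, and taking suprema and then the infimum over $y^*$ gives $H\ge F_{A+B}$ everywhere. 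Hence $(\ast)$, i.e.\ $F_{A+B}(x_0,x_0^*)=\langle x_0,x_0^*\rangle$, provides no upper control on $H(x_0,x_0^*)$, and the closed-subspace constraint qualification delivers exactness and lower semicontinuity of the partial infimal convolution (equivalently a formula for its conjugate), not the inequality $H\le F_{A+B}$. A symptom of the problem is that your argument uses the hypothesis $F_{A+B}\ge\langle\cdot,\cdot\rangle$ only at the single test point, where the resulting equality is essentially automatic for any monotone operator at a monotonically related point; the theorem genuinely needs the global inequality. The known proofs proceed differently: under the closed-subspace hypothesis one shows that $H$ is an exact, lower semicontinuous representative function of $A+B$ with $\{H=\langle\cdot,\cdot\rangle\}=\gra(A+B)$ (so $A+B$ is representable), and then invokes the separate, nontrivial theorem that a representable monotone operator $T$ satisfying $F_T\ge\langle\cdot,\cdot\rangle$ on all of $X\times X^*$ is maximally monotone; it is in that last step, which works with the conjugate of the representative function, that the global hypothesis is actually consumed. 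Your sketch identifies the right objects but does not contain the argument that closes the sandwich.
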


\begin{fact}[Simons]
\emph{(See \cite[Theorem~48.6(a)]{Si2}.)}
\label{slopeD}
Let $f:X\to\RX$ be proper, lower semicontinuous, and convex.
Let $(x,x^*)\in X\times X^*$ and $\alpha>0$ be such that
$(x,x^*)\notin\gra \partial f$. Then for every $\varepsilon>0$,
there exists $(y,y^*)\in\gra \partial f$ with $y\neq x$ and
$y^*\neq x^*$ such that
\begin{align}
\left|\frac{\|x-y\|}{\|x^*-y^*\|}- \alpha\right|<\varepsilon\end{align}
 and
\begin{align}
\left|\frac{\langle x-y,x^*-y^*\rangle}{\|x-y\|\cdot\|x^*-y^*\|}+1\right|<\varepsilon.
\end{align}
\end{fact}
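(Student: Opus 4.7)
My strategy is to use a Moreau--Yosida-type regularization of $f$ and combine approximate minimization with the Brøndsted--Rockafellar theorem.

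First, by replacing $f$ with $z\mapsto f(z+x)-\langle z,x^*\rangle$, we may assume $x=0$ and $x^*=0$; the hypothesis becomes $0\notin\partial f(0)$, and the goal is a pair $(y,y^*)\in\gra\partial f$ with $y\neq 0$, $y^*\neq 0$, $\bigl|\,\|y\|/\|y^*\|-\alpha\bigr|<\varepsilon$, and $\bigl|\langle y,y^*\rangle/(\|y\|\cdot\|y^*\|)+1\bigr|<\varepsilon$. Introduce the perturbed function
\begin{equation*}
g(z):=f(z)+\tfrac{1}{2\alpha}\|z\|^2,
\end{equation*}
which is proper, lsc, convex, and bounded below (since $f$ is minorized by a continuous affine functional while the quadratic term is coercive). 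Fix a small $\eta>0$ and pick $z_0\in X$ with $g(z_0)\leq\inf g+\eta^2$; then $0\in\partial_{\eta^2}g(z_0)$, and the Brøndsted--Rockafellar theorem yields $y\in X$ and $y_g^*\in\partial g(y)$ with $\|y-z_0\|\leq\eta$ and $\|y_g^*\|\leq\eta$. Since $\tfrac{1}{2\alpha}\|\cdot\|^2$ is continuous everywhere, Fact~\ref{f:F4} gives $\partial g(y)=\partial f(y)+\tfrac{1}{\alpha}J(y)$, where $J(y):=\partial\bigl(\tfrac12\|\cdot\|^2\bigr)(y)$ is the (set-valued) duality map. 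Thus $y_g^*=y^*+\tfrac{1}{\alpha}\xi^*$ for some $y^*\in\partial f(y)$ and $\xi^*\in J(y)$.

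By the Fenchel--Young equality characterizing $J$, every $\xi^*\in J(y)$ satisfies $\|\xi^*\|=\|y\|$ and $\langle y,\xi^*\rangle=\|y\|^2$. Combining with $\|y_g^*\|\leq\eta$ yields the two estimates $\bigl|\,\|y^*\|-\tfrac{1}{\alpha}\|y\|\bigr|\leq\eta$ and $\langle y,y^*\rangle=\langle y,y_g^*\rangle-\tfrac{1}{\alpha}\|y\|^2\leq \eta\|y\|-\tfrac{1}{\alpha}\|y\|^2$. Dividing by $\|y^*\|$ and by $\|y\|\cdot\|y^*\|$ respectively, and letting $\eta\downarrow 0$, yields both desired approximations, provided $\|y\|$ is bounded below. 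For this, note that $0$ cannot be a minimizer of $g$: $\partial g(0)=\partial f(0)+\tfrac{1}{\alpha}J(0)=\partial f(0)$, and $0\notin\partial f(0)$ by hypothesis. Hence $\inf g<g(0)$, and lower semicontinuity of $g$ at $0$ forces near-minimizers of $g$ to stay uniformly away from $0$: there exist $\delta,c>0$ with $g\geq\inf g+c$ on $\delta U_X$; choosing $\eta^2<c$ then gives $\|z_0\|\geq\delta$, hence $\|y\|\geq\delta-\eta>0$, and consequently $\|y^*\|>0$ for sufficiently small $\eta$.

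The main obstacle is the non-Hilbert Banach-space setting. In a Hilbert space the regularized problem has an exact minimizer $y$ satisfying $y^*:=-y/\alpha\in\partial f(y)$, which delivers the conclusion exactly and without approximation. In a general Banach space one loses both attainment (lack of reflexivity of the sublevels) and the identification of $J$ with the identity; the Brøndsted--Rockafellar theorem substitutes for the former, while the duality-map identities $\|\xi^*\|=\|y\|$ and $\langle y,\xi^*\rangle=\|y\|^2$ are precisely what make the near-antiparallelism survive in the limit. The remaining bookkeeping is purely a matter of choosing $\eta$ small enough in terms of $\alpha$, $\varepsilon$, and the lower bound $\delta$.
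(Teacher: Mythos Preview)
The paper does not supply its own proof of this statement; it is recorded as a Fact and attributed to Simons \cite[Theorem~48.6(a)]{Si2}, so there is no in-paper argument to compare against.

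As a self-contained argument, your plan is the standard one and is correct: after the affine reduction to $(x,x^*)=(0,0)$, approximately minimize the regularization $g=f+\tfrac{1}{2\alpha}\|\cdot\|^{2}$, apply Br{\o}ndsted--Rockafellar to obtain $(y,y_g^*)\in\gra\partial g$ with $\|y_g^*\|\leq\eta$, and split $y_g^*=y^*+\tfrac{1}{\alpha}\xi^*$ via the sum rule and the duality map. The identities $\|\xi^*\|=\|y\|$ and $\langle y,\xi^*\rangle=\|y\|^{2}$ then yield $\bigl|\,\|y^*\|-\tfrac{1}{\alpha}\|y\|\,\bigr|\leq\eta$ and $\bigl|\langle y,y^*\rangle+\tfrac{1}{\alpha}\|y\|^{2}\bigr|\leq\eta\|y\|$, from which both conclusions follow once $\|y\|$ (hence $\|y^*\|$) is bounded below independently of $\eta$. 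One small point you should make explicit: your lower-bound argument for $\|y\|$ tacitly uses $g(0)<\infty$ when invoking lower semicontinuity. If $0\notin\dom f$, then $g(0)=+\infty$ and lower semicontinuity of $g$ at $0$ directly gives a ball on which $g$ exceeds $\inf g+1$, so the same conclusion holds; this case split is worth a line. With that in place, the estimates
\[
\Bigl|\tfrac{\|y\|}{\|y^*\|}-\alpha\Bigr|\leq \tfrac{\alpha\eta}{\|y^*\|}
\quad\text{and}\quad
\Bigl|\tfrac{\langle y,y^*\rangle}{\|y\|\,\|y^*\|}+1\Bigr|\leq \tfrac{2\eta}{\|y^*\|}
\]
finish the job for $\eta$ small enough. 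Simons' proof in \cite{Si2} is organized somewhat differently (via his ``negative alignment'' machinery built on the $q$-positive framework), but the underlying mechanism---a quadratic penalty combined with a variational principle---is the same as yours.
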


\begin{fact}[Simons]
\emph{(See \cite[Theorem~46.1]{Si2}.)}
\label{f:referee01}
Let $A:X\To X^*$ be a maximally monotone linear relation.
Then $A$ is of type (FPV).
\end{fact}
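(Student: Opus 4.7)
The assertion is that every maximally monotone linear relation $A$ is of type (FPV). I would argue by contradiction: assume there exist an open convex set $U \subseteq X$ with $U \cap \dom A \neq \varnothing$ and a pair $(x_0, x_0^*) \in X \times X^*$ with $x_0 \in U$ such that $(x_0, x_0^*)$ is monotonically related to $\gra A \cap (U \times X^*)$ but $(x_0, x_0^*) \notin \gra A$. Maximal monotonicity of $A$ then furnishes $(b, b^*) \in \gra A$ with $\langle x_0 - b, x_0^* - b^* \rangle < 0$, and necessarily $b \notin U$ (otherwise the monotone-on-$U$ hypothesis would be violated by $(b, b^*)$ itself).

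The heart of the plan is to exploit the linearity of $\gra A$ to drag the witness $(b, b^*)$ into $\gra A \cap (U \times X^*)$, thereby contradicting the monotone-on-$U$ hypothesis. Pick any $(a, a^*) \in \gra A$ with $a \in U$, which exists by hypothesis. Since $\gra A$ is a linear subspace, the line $t \mapsto (1-t)(a, a^*) + t(b, b^*)$ lies entirely in $\gra A$; by openness of $U$ together with $a \in U$, its first coordinate stays in $U$ for all $t \in [0, t_0)$ with some $t_0 > 0$. The monotone-on-$U$ hypothesis therefore forces
\begin{equation*}
\phi(t) := \langle x_0 - a - t(b - a),\; x_0^* - a^* - t(b^* - a^*)\rangle \;\geq\; 0, \qquad t \in [0, t_0),
\end{equation*}
where $\phi$ is a quadratic with leading coefficient $\langle b-a, b^*-a^*\rangle \geq 0$ (by monotonicity of $A$), $\phi(0) = \langle x_0 - a, x_0^* - a^*\rangle \geq 0$, and $\phi(1) < 0$.

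The main obstacle is that a single such $\phi$ is not yet contradictory, since a quadratic with non-negative leading coefficient can pass from non-negative to negative once. To close the argument, I would combine the inequality $\phi(t) \geq 0$ on $[0, t_0)$ with analogous inequalities obtained by varying $(a, a^*)$ over $\gra A \cap (U \times X^*)$ and by adding arbitrary multiples of further elements of $\gra A$ to the base line, using the subspace structure of $\gra A$ to transport $(b, b^*)$ along directions tangent to $\gra A$. The Fitzpatrick function $F_A$ of Fact~\ref{f:Fitz} is the natural bookkeeping device here, since for a maximally monotone linear relation it admits an explicit supremum representation over $\gra A$ in which each summand is a quadratic in a scaling parameter; the hypothesis $(x_0, x_0^*) \notin \gra A$ is then equivalent to $F_A(x_0, x_0^*) > \langle x_0, x_0^*\rangle$, and one aims to show that any witness of this strict inequality can be realized by a point $(y, y^*) \in \gra A$ with $y \in U$. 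I expect this transport step --- ``pushing the negative inner product into $U$'' --- to be the principal technical hurdle, and anticipate that it is handled by a Hahn--Banach separation on $X \times X^*$ married to the linearity of $\gra A$.
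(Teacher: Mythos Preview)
Your approach diverges sharply from the paper's. The paper does not prove Fact~\ref{f:referee01} where it is stated; it cites Simons. At the end of Section~\ref{s:main} it re-derives the statement as a one-line corollary: take $f=\iota_X$, so that $\partial f\equiv\{0\}$ and $A+\partial f=A$, and invoke Corollary~\ref{domain:L1}. Note that this is circular --- Corollary~\ref{domain:L1} rests on Theorem~\ref{PGV:1}, whose proof uses Fact~\ref{f:referee01} twice --- so the paper's ``proof'' is a consistency remark, not an independent argument.

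Your proposal, by contrast, aims at a self-contained proof, and up to the definition of $\phi$ the setup is sound. But the decisive step --- transporting the witness of negative inner product from $(b,b^*)$ to a point of $\gra A$ whose first coordinate lies in $U$ --- is not carried out. You flag it as ``the principal technical hurdle'' and merely \emph{anticipate} a resolution via the Fitzpatrick function and Hahn--Banach, without supplying a mechanism. The observation that $F_A(x_0,x_0^*)>\langle x_0,x_0^*\rangle$ should be witnessed by some $(y,y^*)\in\gra A$ with $y\in U$ is precisely the statement to be proved, not a tool toward it. The quadratic analysis of $\phi$ alone yields nothing: an upward-opening parabola with $\phi(0)\geq 0$ and $\phi(1)<0$ is perfectly compatible with $\phi\geq 0$ on a short interval $[0,t_0)$. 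A complete argument (as in \cite[Theorem~46.1]{Si2}) requires further structure --- for instance the identity $A0=(\dom A)^{\bot}$ and a separate treatment of the cases $x_0\in\overline{\dom A}$ and $x_0\notin\overline{\dom A}$ --- none of which appears in your sketch.
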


\begin{fact}\emph{(See \cite[Proposition~3.3 and Proposition~1.11]{ph}.)}
\label{pheps:1}Let $f:X\rightarrow\RX$ be a lower semicontinuous convex
 and $\intdom f\neq\varnothing$.
Then $f$ is continuous on $\intdom f$ and $\partial f(x)\neq\varnothing$ for every $x\in\intdom f$.
\end{fact}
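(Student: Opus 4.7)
The plan is to prove the two assertions in sequence, with continuity established first by a Baire category argument, and then nonemptiness of the subdifferential following from Hahn--Banach separation applied to the epigraph.

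For the continuity assertion, I would fix $x_0\in\intdom f$ and pick an open ball $U\subseteq\intdom f$ around $x_0$. The lower semicontinuity of $f$ makes each level set $C_n=\{x\in X:f(x)\leq n\}$ closed, and since $U\subseteq\dom f=\bigcup_{n\in\NN}C_n$, the Baire category theorem applied inside $U$ (which is a Baire space as an open subset of a Banach space) produces an $n_0\in\NN$ and a point $y_0\in U$ with a radius $\rho>0$ such that $f\leq n_0$ on $y_0+\rho U_X$. I would then transfer this local upper bound to $x_0$: since $x_0\in\intdom f$, the segment from $y_0$ through $x_0$ can be extended slightly past $x_0$ to a point $z\in\dom f$, and writing any point near $x_0$ as a convex combination of a point in $y_0+\rho U_X$ and $z$ yields an upper bound for $f$ on some ball $x_0+rU_X$. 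Once $f$ is bounded above by some $M$ on $x_0+rU_X$, the classical three-point convexity inequality (comparing $f(x_0)$, $f(x_0+h)$ and $f(x_0-\tfrac{t}{r}h)$ for $\|h\|\leq r/2$) gives local Lipschitz continuity, hence continuity, of $f$ at $x_0$.

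For the subdifferential assertion, I would fix $x_0\in\intdom f$. By the first part, $f$ is continuous at $x_0$, so $\epi f$ has nonempty interior (e.g., $(x_0,f(x_0)+1)\in\inte\epi f$), while $(x_0,f(x_0))$ lies on its boundary. The geometric Hahn--Banach theorem yields $(x^*,\alpha)\in X^*\times\RR\setminus\{(0,0)\}$ and a constant separating $(x_0,f(x_0))$ from $\inte\epi f$, namely
\[
\scal{x_0}{x^*}+\alpha f(x_0)\leq \scal{x}{x^*}+\alpha t\quad\forall (x,t)\in\epi f.
\]
Letting $t\to+\infty$ forces $\alpha\geq 0$. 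The case $\alpha=0$ is excluded because then $x^*$ would separate $x_0$ from $\dom f$, contradicting $x_0\in\intdom f$. Dividing by $\alpha$ and relabeling $-x^*/\alpha$ as $x^*$ converts the inequality into $f(x_0)+\scal{x-x_0}{x^*}\leq f(x)$ for all $x\in X$, i.e.\ $x^*\in\partial f(x_0)$.

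The main obstacle is the transfer step in the continuity argument: getting from the Baire-produced ball $y_0+\rho U_X$ (where $y_0$ need not equal $x_0$) to a genuine neighborhood of $x_0$ on which $f$ is bounded above. This is the place where the hypothesis $x_0\in\intdom f$ (and not merely $x_0\in\dom f$) is used in an essential way, since one needs to extend the segment from $y_0$ through $x_0$ a positive distance past $x_0$ while remaining in $\dom f$.
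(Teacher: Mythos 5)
The paper does not actually prove this Fact---it imports it verbatim from Phelps' book---and your argument is precisely the standard proof found in the cited sources: Baire category on the closed level sets plus the convex-combination transfer to get local boundedness above and hence local Lipschitz continuity at each point of $\inte\dom f$ (Phelps, Proposition~3.3), followed by Hahn--Banach separation of $(x_0,f(x_0))$ from $\inte\epi f$, with the sign analysis of $\alpha$ ruling out a vertical separating functional, to produce a subgradient (Phelps, Proposition~1.11). Both halves are correct as written, including the step you flag as the main obstacle, since $x_0\in\inte\dom f$ does let you push the segment from $y_0$ through $x_0$ slightly beyond $x_0$ while staying in $\dom f$.
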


\begin{fact}\emph{(See \cite[Lemma~2.9]{BWY9}.)}\label{rcf:001}
Let $A:X\To X^*$ be a maximally monotone linear relation,
and let $z\in X\cap (A0)^\bot$.
Then $z\in\overline{\dom A}$.
\end{fact}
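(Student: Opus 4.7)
The plan is to establish the identity $X \cap (A0)^{\bot} = \overline{\dom A}$, from which the statement follows immediately. Since $\gra A$ is a linear subspace, it contains $(0,0)$, so both $A0$ and $\dom A$ are linear subspaces (of $X^*$ and $X$ respectively), and monotonicity of $A$ applied to any $(x,x^*) \in \gra A$ together with $(0,0)$ yields
\begin{align*}
\langle x, x^*\rangle \geq 0 \quad \text{for every } (x,x^*) \in \gra A.
\end{align*}
I will use these two observations repeatedly.

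First I would verify the inclusion $A0 \subseteq (\dom A)^{\bot}$. Fix $x \in \dom A$ with some $x^* \in Ax$, and let $y^* \in A0$. Because $A0$ is a linear subspace, $(0, \lambda y^*) \in \gra A$ for every $\lambda \in \RR$, so monotonicity gives $\lambda \langle x, y^*\rangle \leq \langle x, x^*\rangle$ for all $\lambda \in \RR$, forcing $\langle x, y^*\rangle = 0$.

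The reverse inclusion $(\dom A)^{\bot} \subseteq A0$ is the key step, and the only one that genuinely uses maximality. Given $z^* \in (\dom A)^{\bot}$, for every $(x, x^*) \in \gra A$ we have
\begin{align*}
\langle x - 0, x^* - z^*\rangle = \langle x, x^*\rangle - \langle x, z^*\rangle = \langle x, x^*\rangle \geq 0,
\end{align*}
so $(0, z^*)$ is monotonically related to $\gra A$. Maximal monotonicity of $A$ then forces $(0, z^*) \in \gra A$, i.e., $z^* \in A0$.

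Combining the two inclusions gives $A0 = (\dom A)^{\bot}$. Taking polars once more and invoking the bipolar theorem for the closed subspace $\overline{\dom A}$ (a direct consequence of Hahn--Banach separation) yields $\overline{\dom A} = X \cap ((\dom A)^{\bot})^{\bot} = X \cap (A0)^{\bot}$, so the assumption $z \in X \cap (A0)^{\bot}$ places $z$ in $\overline{\dom A}$, as required. The main obstacle is the second inclusion: without maximality, a pair $(0, z^*)$ merely monotonically related to $\gra A$ need not actually lie in $\gra A$; once this step is available, the remaining work is standard linear-algebraic manipulation of subspaces and their polars.
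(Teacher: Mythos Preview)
The paper does not supply its own proof of this statement; it is quoted as a Fact with a reference to \cite[Lemma~2.9]{BWY9}, so there is no in-paper argument to compare against. Your proof is correct: the two inclusions you establish give $A0=(\dom A)^{\bot}$, and the bipolar identity $X\cap(S^{\bot})^{\bot}=\overline{S}$ for a linear subspace $S\subseteq X$ (an immediate consequence of Hahn--Banach separation) then yields the stronger equality $X\cap(A0)^{\bot}=\overline{\dom A}$, from which the stated inclusion follows.
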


\begin{fact}\emph{(See \cite[Lemma~2.10]{BWY9}.)}\label{rcf:01}
Let $A:X\To X^*$ be a  monotone linear relation,
and let $f:X\to\RX$ be a proper lower semicontinuous convex function.
Suppose that $\dom A \cap \inte \dom \partial f\neq \varnothing$,
$(z,z^*)\in X\times X^*$ is monotonically related to
$\gra (A+\partial f)$, and that $z\in\dom A$.
Then  $z\in  \dom \partial f$.
\end{fact}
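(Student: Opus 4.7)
The plan is to argue by contradiction: assume $z\in\dom A$ but $z\notin\dom\partial f$. To exploit the linearity of $\gra A$ and the interior-point hypothesis simultaneously, I would pick $a_0\in\dom A\cap\inte\dom\partial f$, $a_0^*\in Aa_0$, and $z^{**}\in Az$ (available because $z\in\dom A$), and form the segment $y_t:=(1-t)a_0+tz$ for $t\in[0,1]$. Since $\gra A$ is a linear subspace containing $(a_0,a_0^*)$ and $(z,z^{**})$, the pair $(y_t,v_t^*)$ with $v_t^*:=(1-t)a_0^*+tz^{**}$ lies in $\gra A$, so $y_t\in\dom A$ for every $t$ and the selection $\{v_t^*\}_{t\in[0,1]}$ is bounded. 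Setting $\bar t:=\sup\{t\in[0,1]:y_t\in\inte\dom f\}$, one has $\bar t\in(0,1]$ because $a_0\in\inte\dom f$, and Fact~\ref{pheps:1} furnishes $w_t^*\in\partial f(y_t)$ for $t\in[0,\bar t)$, so that $(y_t,v_t^*+w_t^*)\in\gra(A+\partial f)$.

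Next I would feed this into the monotonic relation $\langle z-y_t,z^*-v_t^*-w_t^*\rangle\geq 0$. Using $z-y_t=(1-t)(z-a_0)$ and dividing by $1-t>0$ yields
\[
\langle z-a_0,\,w_t^*\rangle\leq\langle z-a_0,\,z^*-v_t^*\rangle\qquad(t\in[0,\bar t)),
\]
a uniform upper bound. Combining this with the subgradient inequality $f(a_0+h)\geq f(y_t)+\langle a_0+h-y_t,w_t^*\rangle$ tested for $h$ in a neighbourhood of $0$ on which $f$ is bounded above (Fact~\ref{pheps:1}) should upgrade the one-sided control into a uniform norm bound $\sup_{t\in[0,\bar t)}\|w_t^*\|<\infty$. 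Banach--Alaoglu then yields a subnet $w_{t_\alpha}^*\weaklys w^*$ in $X^*$ with $t_\alpha\to\bar t^-$; norm-convergence $y_{t_\alpha}\to y_{\bar t}$ together with boundedness of $\{w_{t_\alpha}^*\}$ gives $\langle x-y_{t_\alpha},w_{t_\alpha}^*\rangle\to\langle x-y_{\bar t},w^*\rangle$ for each $x\in X$, and passing to the limit in the subgradient inequality while invoking the lower semicontinuity of $f$ delivers $w^*\in\partial f(y_{\bar t})$. If $\bar t=1$, this reads $z=y_{\bar t}\in\dom\partial f$, contradicting the standing assumption.

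The hard part will be ruling out the residual case $\bar t<1$. There the open-segment principle (applied to the convex set $\inte\dom f$ and $a_0\in\inte\dom f$) forces $z\notin\cldom f$ and $[a_0,z]\cap\cldom f=[a_0,y_{\bar t}]$, so $y_{\bar t}\in\bd\cldom f$ and the direction $z-y_{\bar t}$ should fall outside the tangent cone $T_{\cldom f}(y_{\bar t})$. By polar duality this produces $v^*\in N_{\cldom f}(y_{\bar t})$ with $\langle z-y_{\bar t},v^*\rangle>0$; a direct check from the definitions gives $\partial f(y_{\bar t})+\lambda v^*\subseteq\partial f(y_{\bar t})$ for every $\lambda\geq 0$, hence $(y_{\bar t},v_{\bar t}^*+w^*+\lambda v^*)\in\gra(A+\partial f)$ and the monotonic relation forces
\[
\lambda\langle z-y_{\bar t},v^*\rangle\leq\langle z-y_{\bar t},\,z^*-v_{\bar t}^*-w^*\rangle\qquad(\lambda\geq 0),
\]
which is impossible as $\lambda\to\infty$. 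The main obstacle is ensuring $z-y_{\bar t}\notin T_{\cldom f}(y_{\bar t})$ rigorously in all boundary configurations---the first exit point of a segment from a convex set need not be ``transversal'' in pathological cases---perhaps by varying $a_0$ inside $\dom A\cap\inte\dom\partial f$ to avoid tangential contact, or by a more refined quantitative argument on the growth of $\|w_t^*\|$ near $\bar t$.
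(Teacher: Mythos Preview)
The paper does not supply a proof of this statement; it is quoted as a Fact with a bare citation to \cite[Lemma~2.10]{BWY9}. So there is no in-paper argument to compare your approach against.

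Assessing your proposal on its own merits: the overall strategy is sound, and in fact the step you flag as ``the main obstacle'' is not an obstacle at all. Once you know $\bar t\in(0,1)$ and $y_{\bar t}\in\bd\overline{\dom f}$ (this follows because $\inte\dom f=\inte\overline{\dom f}$ for convex sets with nonempty interior), the supporting-hyperplane theorem gives a nonzero $v^*\in N_{\overline{\dom f}}(y_{\bar t})$. Since $a_0\in\inte\overline{\dom f}$ and $v^*\neq 0$, the inequality $\langle a_0-y_{\bar t},v^*\rangle<0$ is \emph{strict}. Now $y_{\bar t}-a_0=\bar t(z-a_0)$ and $z-y_{\bar t}=(1-\bar t)(z-a_0)$, so
\[
\langle z-y_{\bar t},v^*\rangle=\frac{1-\bar t}{\bar t}\,\langle y_{\bar t}-a_0,v^*\rangle>0,
\]
which is exactly what you need; no transversality hypothesis, no perturbation of $a_0$, and no refined growth analysis is required. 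The rest of your outline (the uniform bound on $\|w_t^*\|$, the weak* compactness, the inclusion $\partial f(y_{\bar t})+N_{\dom f}(y_{\bar t})\subseteq\partial f(y_{\bar t})$, and the blow-up as $\lambda\to\infty$) is correct. Two small points worth making explicit when you write it up: (i) the lower bound on $f(y_t)$ needed for the norm estimate comes from any affine minorant of $f$, available because $f$ is proper lsc convex; and (ii) the upper bound $f(y_t)\leq f(a_0)+t\langle z-a_0,w_t^*\rangle$ from the subgradient inequality at $a_0$, combined with your step-8 bound, yields $f(y_{\bar t})<\infty$ via lower semicontinuity, so $w^*\in\partial f(y_{\bar t})$ is a genuine subgradient.
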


\begin{fact}[Simons and Verona-Verona]
\emph{(See \cite[Thereom~44.1]{Si2} or \cite{VV1}.)}
\label{f:referee02a}
Let $A:X\To X^*$ be a maximally monotone. Suppose that
for every closed convex subset $C$ of $X$
with $\dom A \cap \inte C\neq \varnothing$, the operator
$A+N_C$ is maximally monotone.
Then $A$ is of type  (FPV).
\end{fact}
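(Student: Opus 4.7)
The plan is to set $C := \overline{U}$ for a given nonempty open convex $U \subseteq X$ with $U \cap \dom A \neq \varnothing$ and a point $(x, x^*) \in X \times X^*$ with $x \in U$ monotonically related to $\gra A \cap (U \times X^*)$; the aim is to conclude $(x, x^*) \in \gra A$. Since $U$ is nonempty open convex, $\inte \overline{U} = U$, so $\dom A \cap \inte C = \dom A \cap U \neq \varnothing$ and the hypothesis of the proposition yields that $A + N_C$ is maximally monotone. Because $x \in U = \inte C$ one has $N_C(x) = \{0\}$ and hence $(A + N_C)(x) = A(x)$; it therefore suffices to prove that $(x, x^*)$ is monotonically related to $\gra(A + N_C)$, for then maximal monotonicity of $A + N_C$ will force $(x, x^*) \in \gra(A + N_C)$, which at the interior point $x$ reduces to $(x, x^*) \in \gra A$.

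To verify the monotonic relation, pick any $(y, u^*) \in \gra(A + N_C)$ and decompose $u^* = y^* + z^*$ with $y^* \in A(y)$ and $z^* \in N_C(y)$, noting that $y \in C$. Then
\[
\langle x - y, x^* - u^* \rangle = \langle x - y, x^* - y^* \rangle + \langle y - x, z^* \rangle.
\]
The second summand is $\geq 0$ because $x \in C$ and $z^* \in N_C(y)$. The first summand is $\geq 0$ whenever $y \in U$ by the monotonic relation hypothesis, leaving only the boundary case $y \in \bd C \cap \dom A$.

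For the boundary case, the idea is to approach $y$ from within $U$ along the segment joining $y$ to a fixed $x_0 \in \dom A \cap U$: the points $y_\lambda := \lambda x_0 + (1-\lambda) y$ lie in $U$ for $\lambda \in (0,1]$ and converge to $y$ as $\lambda \to 0^+$. Combining the monotonic relation at points $z \in U \cap \dom A$ close to $y_\lambda$ (yielding $\langle x - z, x^* - z^* \rangle \geq 0$) with the monotonicity of $A$ at the pair $(y, y^*)$ (yielding $\langle y - z, y^* - z^* \rangle \geq 0$), and then sending $z \to y$ and $\lambda \to 0^+$, one would obtain $\langle x - y, x^* - y^* \rangle \geq 0$. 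The delicate point, and the principal obstacle, is producing such $z$ arbitrarily close to $y_\lambda$: because $y_\lambda$ itself need not lie in $\dom A$, the hypothesis must likely be reinvoked with an auxiliary closed convex set (for example a small closed ball $\overline B$ about $y_\lambda$ containing a point of $\dom A$), using the maximal monotonicity of $A + N_{\overline B}$ to extract elements of $\gra A$ in every neighborhood of $y_\lambda$. Once this density-plus-limit argument is in hand, the two cases combine to give the required monotonic relation, and the conclusion $(x, x^*) \in \gra A$ follows from the reduction in the first paragraph.
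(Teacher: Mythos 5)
The paper does not actually prove this statement---it is imported as a known result (Simons \cite[Theorem~44.1]{Si2}, Verona--Verona \cite{VV1})---so your attempt can only be measured against the standard argument. Your overall strategy is the right one: choose a suitable closed convex $C$, show that $(x,x^*)$ is monotonically related to $\gra(A+N_C)$, invoke the hypothesized maximal monotonicity of $A+N_C$, and read off $x^*\in Ax$ because $N_C(x)=\{0\}$ at an interior point of $C$. The gap is the choice $C=\overline{U}$. With that choice $\dom(A+N_C)=\dom A\cap\overline{U}$ may contain points $y\in\bd U$ (in the situations where this Fact is applied in the paper, $\dom A$ is dense, so such points genuinely occur), and for these the hypothesis that $(x,x^*)$ is monotonically related to $\gra A\cap(U\times X^*)$ says nothing. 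Your proposed repair of the boundary case does not close the gap, for two reasons. First, the points $y_\lambda=\lambda x_0+(1-\lambda)y$ need not lie in $\overline{\dom A}$, so no auxiliary set fed into the hypothesis can produce elements of $\gra A$ in every neighborhood of $y_\lambda$. Second, even granting points $(z,z^*)\in\gra A$ with $z\in U$ and $z\to y$, the two inequalities $\scal{x-z}{x^*-z^*}\geq 0$ and $\scal{y-z}{y^*-z^*}\geq 0$ both bound $\scal{\cdot}{z^*}$ from the same side and give no control on $\|z^*\|$, which may blow up; they do not combine in the limit to yield $\scal{x-y}{x^*-y^*}\geq 0$. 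This is not a technicality: the whole difficulty of the (FPV) property is that the behaviour of $A$ on $\bd U$ is not determined by its behaviour on $U$.

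The correct move is to shrink $C$ so that the boundary case never arises. Fix $x_0\in\dom A\cap U$. Since $[x,x_0]$ is compact and contained in the open set $U$, there is $\varepsilon>0$ with $[x,x_0]+2\varepsilon U_X\subseteq U$. Put $C:=\overline{[x,x_0]+\varepsilon U_X}$, a closed convex set with $C\subseteq U$, $x\in\inte C$, and $x_0\in\dom A\cap\inte C$, so the hypothesis applies and $A+N_C$ is maximally monotone. Now $\dom(A+N_C)=\dom A\cap C\subseteq\dom A\cap U$, so every $(y,y^*+z^*)\in\gra(A+N_C)$ with $y^*\in Ay$ and $z^*\in N_C(y)$ has $y\in U$; your decomposition $\scal{x-y}{x^*-y^*-z^*}=\scal{x-y}{x^*-y^*}+\scal{y-x}{z^*}\geq 0$ then works with no exceptional case, and maximality gives $x^*\in Ax+N_C(x)=Ax$. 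With this single change of $C$, the rest of your write-up is correct.
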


\begin{fact}\label{Ll:l1}\emph{(See \cite[lemma~2.5]{BWY4}.)}
Let $C$ be  a nonempty closed convex
subset of $X$ such that $\inte C\neq \varnothing$.
Let $c_0\in \inte C$ and suppose that $z\in X\smallsetminus C$.
Then there exists
$\lambda\in\left]0,1\right[$ such
that $\lambda c_0+(1-\lambda)z\in\bd C$.
\end{fact}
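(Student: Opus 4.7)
The plan is to use a one-dimensional connectedness argument along the line segment from $z$ to $c_{0}$. Define the continuous mapping $\phi\colon [0,1]\to X$ by $\phi(\lambda)=\lambda c_{0}+(1-\lambda)z$, so that $\phi(0)=z\notin C$ while $\phi(1)=c_{0}\in\inte C\subseteq C$. The idea is to trace $\phi$ and identify the first value of $\lambda$ at which the path enters $C$; that hitting point must lie on $\bd C$.

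First I would set $T=\{\lambda\in[0,1]\mid \phi(\lambda)\in C\}$ and record two structural properties. Closedness of $T$ in $[0,1]$ is immediate since $T=\phi^{-1}(C)$ is the preimage of a closed set under a continuous map. Convexity of $T$ follows because, for $\lambda_{1},\lambda_{2}\in T$ and $s\in[0,1]$, the point $\phi(s\lambda_{1}+(1-s)\lambda_{2})$ equals $s\phi(\lambda_{1})+(1-s)\phi(\lambda_{2})\in C$ by convexity of $C$. Since $1\in T$, these two properties force $T$ to be a closed subinterval $[\lambda^{*},1]$ of $[0,1]$, where $\lambda^{*}:=\inf T$.

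Next I would argue $\lambda^{*}\in\left]0,1\right[$. Note that $c_{0}\neq z$ because $c_{0}\in C$ while $z\notin C$. For $\lambda^{*}>0$: since $z\notin C$ and $C$ is closed, there exists $\delta>0$ with $(z+\delta U_{X})\cap C=\varnothing$, so $\phi(\lambda)\notin C$ whenever $\lambda\|c_{0}-z\|<\delta$, which yields $\lambda^{*}\geq \delta/\|c_{0}-z\|>0$. For $\lambda^{*}<1$: since $c_{0}\in\inte C$, there exists $\eta>0$ with $c_{0}+\eta U_{X}\subseteq C$, and then for $\lambda$ sufficiently close to $1$ the identity $\phi(\lambda)=c_{0}+(1-\lambda)(z-c_{0})$ places $\phi(\lambda)$ inside $c_{0}+\eta U_{X}$, hence in $C$, so such $\lambda$ belongs to $T$ and therefore $\lambda^{*}<1$.

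Finally I would verify $\phi(\lambda^{*})\in\bd C$. Since $\lambda^{*}\in T$, we have $\phi(\lambda^{*})\in C$. Suppose for contradiction that $\phi(\lambda^{*})\in\inte C$; then some ball $\phi(\lambda^{*})+\rho U_{X}\subseteq C$, and for $\varepsilon>0$ small enough we obtain $\phi(\lambda^{*}-\varepsilon)=\phi(\lambda^{*})+\varepsilon(z-c_{0})\in \phi(\lambda^{*})+\rho U_{X}\subseteq C$, contradicting the minimality of $\lambda^{*}$. Hence $\phi(\lambda^{*})\in C\setminus\inte C=\bd C$. No individual step is the genuine ``hard part''; the argument is a routine exercise in convexity, continuity, and openness of the interior, and the only mild subtlety is ensuring both endpoint bounds so that $\lambda^{*}$ lies strictly inside $\left]0,1\right[$.
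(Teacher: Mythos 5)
Your proof is correct and complete: the set $T=\phi^{-1}(C)$ is indeed a closed convex subset of $[0,1]$ containing $1$, the two endpoint estimates correctly place $\lambda^{*}=\inf T$ strictly inside $\left]0,1\right[$, and the minimality argument shows $\phi(\lambda^{*})\notin\inte C$, hence $\phi(\lambda^{*})\in\bd C$. Note that the paper itself gives no proof of this statement --- it is imported as a Fact from \cite[Lemma~2.5]{BWY4} --- so there is nothing to compare against line by line; your argument is the standard elementary one that any source would use, and it stands on its own.
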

\section{Main Result}
\label{s:main}

\begin{theorem}\label{PGV:1}
Let $A:X\To X^*$ be a maximally monotone linear relation,
and let $f:X\To \RX$ be a proper lower semicontinuous convex function
with $\dom A\cap\inte\dom \partial f\neq\varnothing$.  Then
$A+\partial f$ is maximally monotone.
\end{theorem}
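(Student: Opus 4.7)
The strategy is to show that every $(z, z^*) \in X \times X^*$ monotonically related to $\gra(A+\partial f)$ lies in $\gra(A+\partial f)$, thereby establishing maximal monotonicity directly. As an alternative route one can invoke Fact~\ref{f:referee1}; its constraint qualification is immediate, since $\dom A \cap \inte \dom \partial f \neq \varnothing$ forces $\bigcup_{\lambda > 0}\lambda\bigl[P_X(\dom F_A) - P_X(\dom F_{\partial f})\bigr] = X$, a closed subspace, leaving only the Fitzpatrick inequality $F_{A+\partial f}\geq \langle\cdot,\cdot\rangle$ to be checked. Either way the real work is the same, and I will describe the direct form.

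Fix $(z, z^*)$ monotonically related to $\gra(A+\partial f)$. The first step --- and the main obstacle --- is to show $z \in \dom A$. Pick $a_0 \in \dom A \cap \inte \dom \partial f$, any $a_0^* \in Aa_0$, and any $b_0^* \in \partial f(a_0)$ (nonempty by Fact~\ref{pheps:1}). Since $A$ is a linear relation, $(a_0,\,a_0^* + w + b_0^*) \in \gra(A+\partial f)$ for every $w \in A0$, and letting $w$ range over the subspace $A0$ in the monotone relation forces $\langle z - a_0, w\rangle = 0$ for all $w \in A0$, so $z - a_0 \in (A0)^{\bot}$. Together with $a_0 \in \dom A \subseteq (A0)^{\bot}$ (a standard consequence of monotonicity and linearity of $A$), this gives $z \in (A0)^{\bot}$, and Fact~\ref{rcf:001} then yields $z \in \overline{\dom A}$. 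To upgrade this to $z \in \dom A$ I would argue by contradiction using that $A$ is of type (FPV) (Fact~\ref{f:referee01}) on an open convex set $U$ containing $z$ and meeting $\dom A$. The subtle point is that our hypothesis only supplies monotone relation with $\gra(A+\partial f)$, not $\gra A$; Fact~\ref{pheps:11} bounds $\partial f$ by some $K$ on a ball $a_0 + \delta U_X \subseteq \inte \dom \partial f$ and yields only the approximate bound $\langle z - y,\, z^* - a_y^*\rangle \geq -K\|z - y\|$ for $(y, a_y^*) \in \gra A$ near $a_0$. Absorbing this slack to obtain genuine monotone relatedness on $\gra A \cap (U \times X^*)$ requires combining the linear-relation structure of $A$ (rescaling and translating graph elements) with a boundary-hitting argument along the segment $[a_0, z]$ via Fact~\ref{Ll:l1}; this is the delicate technical piece.

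Once $z \in \dom A$ is established, Fact~\ref{rcf:01} immediately gives $z \in \dom \partial f$. It then remains to exhibit $a^* \in Az$ with $z^* - a^* \in \partial f(z)$; by maximal monotonicity of $\partial f$ (Fact~\ref{SubMR}), this reduces to finding $a^* \in Az$ such that $(z,\, z^* - a^*)$ is monotonically related to $\gra \partial f$. For $(y, y^*) \in \gra \partial f$ with $y \in \dom A$, the hypothesis combined with the monotonicity of $A$ (giving $\langle z - y,\, a^* - a_y^*\rangle \geq 0$ for $a_y^* \in Ay$) yields the required inequality after a suitable choice of $a^*$; for $y \in \dom \partial f \setminus \dom A$ I would approximate $y$ by points in $\dom A \cap \dom \partial f$ using the constraint qualification, invoking Simons' slope lemma (Fact~\ref{slopeD}) to rule out any degenerate limiting behavior if needed. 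The bulk of the difficulty is concentrated in the FPV step above.
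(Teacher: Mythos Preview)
Your plan shares the opening moves with the paper --- reducing (via Fact~\ref{f:referee1} or directly) to a pair $(z,z^*)$ monotonically related to $\gra(A+\partial f)$, showing $z\in(A0)^\bot$, and hence $z\in\overline{\dom A}$ by Fact~\ref{rcf:001}. From there, however, your outline and the paper's argument diverge precisely where you flag ``the delicate technical piece'', and what you sketch there does not close the gap.

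You aim to prove $z\in\dom A$ and then select $a^*\in Az$ with $z^*-a^*\in\partial f(z)$. The paper does \emph{not} show $z\in\dom A$. Working instead with the strict Fitzpatrick gap $F_{A+\partial f}(z,z^*)+\lambda<\langle z,z^*\rangle$, it observes that $z\in\dom A$ would finish immediately via Fact~\ref{rcf:01}, so in the contradiction scenario $z\in\overline{\dom A}\smallsetminus\dom A$. FPV then produces $(a_n,a_n^*)\in\gra A$ with $a_n\to z$ and $\langle z-a_n,z^*-a_n^*\rangle<0$. The real engine of the proof is the claim $z\in\overline{\dom\partial f}$: assuming otherwise, one applies Simons' slope lemma (Fact~\ref{slopeD}) to $g_n=f+\iota_{[0,a_n]}$, splits $\partial g_n(\beta_na_n)$ via Fact~\ref{f:F4} into a $\partial f$-part $w_n^*$ and a normal-cone part $v_n^*$, and runs a two-case weak$^*$-compactness argument (bounded vs.\ unbounded $(w_n^*)$) against local boundedness of $\partial f$ at $0$ to contradict the $\lambda$-gap. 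Once $z\in\overline{\dom f}$, every $tz$ with $t\in\,]0,1[$ lies in $\inte\dom\partial f$; a second use of FPV near $tz$ together with Fact~\ref{pheps:11} yields $F_{A+\partial f}(tz,tz^*)\geq t^2\langle z,z^*\rangle$, and continuity of $t\mapsto F_{A+\partial f}(tz,tz^*)$ on $[0,1]$ forces the contradiction as $t\to 1^-$.

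Neither of these two mechanisms --- the slope-lemma argument pinning $z$ to $\overline{\dom\partial f}$, and the continuity-along-the-ray conclusion --- appears in your outline, and your proposed substitute (``absorb the slack'' to obtain genuine monotone relatedness to $\gra A$ near $z$) is not made to work: near $z$ you have no bound on $\partial f$, so the $-K\|z-y\|$ error does not disappear, and rescaling via linearity of $\gra A$ moves the base point without controlling $\partial f$ there. Your final paragraph also needs a \emph{specific} $a^*\in Az$ for which $(z,z^*-a^*)$ is monotonically related to all of $\gra\partial f$, including points $y\notin\dom A$; the approximation you gesture at does not select such an $a^*$. Finally, the remark that the direct route and the Fitzpatrick route are ``the same work'' undersells the latter: the strict $\lambda$-gap is used repeatedly in the paper's limiting arguments and is what makes the contradictions go through.
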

\begin{proof} After translating the graphs if necessary, we can and do assume that
$0\in\dom A\cap\inte\dom \partial f$ and that $(0,0)\in\gra A\cap\gra \partial f$.
By Fact~\ref{f:Fitz} and Fact~\ref{SubMR}, $\dom A\subseteq P_X(\dom F_A)$ and
 $\dom \partial f\subseteq P_X(\dom F_{\partial f})$.
Hence,
\begin{align}\bigcup_{\lambda>0} \lambda
\big(P_X(\dom F_A)-P_X(\dom F_{\partial f})\big)=X.\end{align}
Thus, by Fact~\ref{SubMR} and Fact~\ref{f:referee1}, it suffices to show that
\begin{equation} \label{e0:ourgoal}
F_{A+ \partial f}(z,z^*)\geq \langle z,z^*\rangle,\quad \forall(z,z^*)\in X\times X^*.
\end{equation}
Take $(z,z^*)\in X\times X^*$.
Then
\begin{align}
&F_{A+\partial f}(z,z^*)\nonumber\\
&=\sup_{\{x,x^*,y^*\}}\left[\langle x,z^*\rangle+\langle z,x^*\rangle-\langle x,x^*\rangle
+\langle z-x, y^*\rangle -\iota_{\gra A}(x,x^*)-\iota_{\gra
\partial f}(x,y^*)\right].\label{see:1}
\end{align}
Assume to the contrary  that
\begin{align}
F_{A+\partial f}(z,z^*)+\lambda<\langle z,z^*\rangle,\label{See:1a4}
\end{align}
where $\lambda>0$.

Thus by \eqref{See:1a4},
\begin{align}
(z,z^*)\,\text{ is monotonically related to $\gra (A+\partial f)$}.\label{SDF:47}\end{align}

We claim that \begin{align}z\notin \dom A.\label{LSD:1}\end{align}
 Indeed, if $z\in\dom A$, apply \eqref{SDF:47} and Fact~\ref{rcf:01}
to get $z\in \dom\partial f$. Thus $z\in\dom A\cap\dom\partial f$ and hence
$F_{A+\partial f}(z,z^*)\geq\langle z,z^*\rangle$ which contradicts
\eqref{See:1a4}. This verifies \eqref{LSD:1}.

By \eqref{See:1a4} and the assumption that $(0,0)\in\gra A\cap\gra \partial f$,
we have
\begin{align*}
\sup\left[\langle 0,z ^*\rangle+\langle z,A0\rangle
-\langle 0, A0\rangle+\langle z, \partial f(0)\rangle\right]
= \sup_{a^*\in A0,b^*\in\partial f(0)}
\left[\langle z,a^*\rangle+\langle z, b^*\rangle\right]
<\langle z,z^*\rangle.
\end{align*}
Thus, because $A0$ is a linear subspace,
\begin{align}z\in X\cap (A0)^\bot.\label{ree:2}\end{align}
Then, by
Fact~\ref{rcf:001}, we have
\begin{align}z\in\overline{\dom A}.\label{ree:5}\end{align}
 Combine \eqref{LSD:1} and \eqref{ree:5},
 \begin{align}
 z\in\overline{\dom A}\backslash{\dom A}.\label{LSD:2}\end{align}
Set
 \begin{align}
U_n=  z+\tfrac{1}{n}U_X,\quad \forall n\in\NN\label{FD:1}.
\end{align}
By \eqref{LSD:2}, $(z,z^*)\notin\gra A$ and $U_n\cap\dom A\neq\varnothing$.
  Since $z\in U_n$ and $A$ is type of (FPV)
 by Fact~\ref{f:referee01},
there exist $(a_n, a^*_n)\in\gra A$  with $a_n\in U_n,  n\in\NN$ such that
\begin{align}
\langle z,a^*_n\rangle+\langle a_n,z^*\rangle-\langle a_n, a^*_n\rangle
>\langle z,z^*\rangle.\label{LSD:3}\end{align}
Then we have
 \begin{align}
 a_n\rightarrow z.\label{LSD:8}\end{align}

Now we claim that \begin{align}z\in\overline{\dom \partial f}.\end{align}

Suppose to the contrary that $z\not\in\overline{\dom\partial f}$.
By the Br{\o}ndsted-Rockafellar Theorem (see \cite[Theorem~3.17]{ph} or \cite[Theorem~3.1.2]{Zalinescu}),
 $\overline{\dom\partial f}=\overline{\dom f}$.
By $0\in\inte \dom \partial f\subseteq\inte\dom f\subseteq\inte\overline{\dom f}$, then by Fact~\ref{Ll:l1},
 there exists $\delta\in\left]0,1\right[$
such that
\begin{align}\delta z \in\bd\overline{\dom f}.\label{LSD:4}\end{align}

Set $g_n:X\rightarrow \RX$ by
\begin{align}
g_n=f+\iota_{\left[0,a_n\right]},\quad n\in\NN\end{align}

Since $z\notin\overline{\dom f}$, $z\not\in\dom f\cap \left[0,a_n\right]=\dom g_n$.
Thus $(z,z^*)\notin \gra\partial g_n$.
Then by Fact~\ref{slopeD},
  there exist $\beta_n\in\left[0,1\right]$ and $x^*_n
\in\partial g_n (\beta_n a_n)$  with $x^*_n\neq z^*$ and $\beta_n a_n\neq z$ such that
\begin{align}
\frac{\|z-\beta_n a_n\|}{\|z^*-x^*_n\|}&\geq n\label{LSD:5}\\
\frac{\langle z-\beta_n a_n,z^*-x^*_n\rangle}{\|z-\beta_n a_n\|\cdot\|z^*-x^*_n\|}&<-\tfrac{3}{4}.
\label{LSD:7}
\end{align}

By \eqref{LSD:8}, $\|z-\beta_n a_n\|$ is bounded. Then by \eqref{LSD:5}, we have
\begin{align}
x^*_n\rightarrow z^*. \label{LSD:9}\end{align}
Since $0\in\inte\dom f$,  $f$ is continuous at $0$
 by Fact~\ref{pheps:1}. Then by $0\in\dom f\cap \dom \iota_{\left[0,a_n\right]}$
  and Fact~\ref{f:F4}, we have that there exist
$w^*_n\in\partial f(\beta_n a_n)$ and $v^*_n\in
\partial \iota_{\left[0,a_n\right]} (\beta_n a_n)$ such that
$x^*_n=w^*_n+v^*_n$.
Then by \eqref{LSD:9},
\begin{align}
w^*_n+v^*_n\rightarrow z^*.\label{LSD:15}\end{align}

Since $\beta_n\in \left[0,1\right]$,  there exists  a
convergent subsequence of $(\beta_n)_{n\in\NN}$, which, for convenience, we
still denote by $(\beta_n)_{n\in\NN}$. Then
$\beta_n\rightarrow \beta$, where $\beta\in\left[0,1\right]$.
Then by \eqref{LSD:8},
\begin{align}
\beta_n a_n\rightarrow \beta z.\label{LSD:11}\end{align}

We claim that
\begin{align}
\beta\leq\delta<1.\label{LSD:12}\end{align}

In fact, suppose to the contrary that $\beta>\delta$. By \eqref{LSD:11}, $\beta z\in\overline{\dom f}$.
 Then by $0\in\inte\dom f$ and \cite[Theorem~1.1.2(ii)]{Zalinescu},
$\delta z=\tfrac{\delta}{\beta } \beta z\in \inte\overline{\dom f},$
which contradicts \eqref{LSD:4}.

We can and do suppose that $\beta_n< 1$  for every $n\in\NN$.
Then by $v^*_n\in \partial \iota_{\left[0,a_n\right]}(\beta_n a_n)$, we have
\begin{align}
\langle v^*_n, a_n-\beta_n a_n\rangle\leq 0.\end{align}
Dividing by $(1-\beta_n)$ on both sides of the above inequality, we have
\begin{align}
\langle v^*_n,  a_n\rangle\leq 0.\label{LSD:14}\end{align}

Since $(0,0)\in\gra A$, $\langle a_n, a^*_n\rangle\geq 0, \forall n\in\NN$.
Then by \eqref{LSD:3}, we have
\begin{align}&\langle  z, \beta_n a^*_n\rangle+
\langle \beta_n a_n,z^*\rangle-\beta^2_n \langle a_n, a^*_n\rangle
 \geq\langle \beta_n z,a^*_n\rangle+\langle \beta_n a_n,z^*\rangle
 -\beta_n \langle a_n, a^*_n\rangle
\geq\beta_n \langle z,z^*\rangle.\label{LSD:17}\end{align}
Then by \eqref{LSD:17},
\begin{align}
\langle  z-\beta_n a_n, \beta_n a^*_n\rangle\geq\langle \beta_n z-\beta_n a_n, z^*\rangle.
\label{LSD:20}
\end{align}
Since $\gra A$ is a linear subspace and $(a_n,a^*_n)\in\gra A$, $(\beta_n a_n, \beta_n a^*_n)\in\gra A$.
 By \eqref{See:1a4}, we have
 \begin{align*}
 \lambda<&\langle z-\beta_n a_n, z^*-w^*_n-\beta_n a^*_n\rangle\\
 &= \langle z-\beta_n a_n, z^*-w^*_n\rangle+\langle z-\beta_n a_n, -\beta_n a^*_n\rangle\\
 &< -\tfrac{3}{4}\|z-\beta_n a_n\|\cdot\|z^*-w^*_n-v^*_n\|+\langle z-\beta_n a_n, v^*_n\rangle
 +\langle z-\beta_n a_n, -\beta_n a^*_n\rangle\quad\text{(by \eqref{LSD:7})}\\
 &\leq -\tfrac{3}{4}\|z-\beta_n a_n\|\cdot\|z^*-w^*_n-v^*_n\|+\langle z-\beta_n a_n, v^*_n\rangle
 -\langle \beta_n z-\beta_n a_n, z^*\rangle\quad\text{(by \eqref{LSD:20})}.
 \end{align*}
Then
\begin{align}
 \lambda
 <
 \langle z-\beta_n a_n, v^*_n\rangle-\langle \beta_n z-\beta_n a_n, z^*\rangle.\label{LSD:21}
 \end{align}

Now we consider two cases:

\emph{Case 1}:
 $(w^*_n)_{n\in\NN}$ is bounded.

 By \eqref{LSD:15}, $(v^*_n)_{n\in\NN}$ is bounded. By the Banach-Alaoglu Theorem
(see \cite[Theorem~3.15]{Rudin}), there  exist a weak* convergent \emph{subnet}
$(v^*_\gamma)_{\gamma\in\Gamma}$ of $(v^*_n)_{n\in\NN}$, say
\begin{align}{v^*_\gamma}\weakstarly  v^*_{\infty}\in X^*.\label{FCGG:9}\end{align}

Combine \eqref{LSD:8}, \eqref{LSD:11} and \eqref{FCGG:9}, and
pass the limit along the subnet of \eqref{LSD:21} to get that

\begin{align}
 \lambda\leq
 \langle z-\beta z, v^*_{\infty}\rangle.\label{LSD:22}
 \end{align}
By \eqref{LSD:12},  divide by $(1-\beta)$ on both sides of \eqref{LSD:22} to get
\begin{align}
\langle z,v^*_{\infty}\rangle\geq\tfrac{\lambda}{1-\beta}>0.\label{LSD:23}
\end{align}
On the other hand, by \eqref{LSD:8} and \eqref{FCGG:9}, passing the limit
along the subnet of \eqref{LSD:14} to get that
\begin{align}
\langle v^*_{\infty}, z\rangle\leq 0,\end{align}
which contradicts \eqref{LSD:23}.

\emph{Case 2}:
 $(w^*_n)_{n\in\NN}$ is unbounded.

Since $(w^*_n)_{n\in\NN}$ is unbounded and
after passing to a subsequence if necessary, we assume that
$\|w^*_n\|\neq 0,\forall n\in\NN$ and that $\|w^*_n\|\rightarrow +\infty$.

By the Banach-Alaoglu Theorem
again, there  exist a weak* convergent \emph{subnet}
$(w^*_\nu)_{\nu\in I }$ of $(w^*_n)_{n\in\NN}$, say
\begin{align}\frac{w^*_{\nu}}{\|w^*_{\nu}\|}\weakstarly  w^*_{\infty}\in X^*.\label{FCGG:LSD9}\end{align}

By $0\in\inte\dom \partial f $ and Fact~\ref{pheps:11},
there exist $\rho>0$ and $M>0$ such that
\begin{align}
\partial f(y)\neq\varnothing \quad\text{and}\quad
\sup_{y^*\in \partial f(y)}\|y^*\|\leq M,\quad \forall y\in \rho
U_X.\label{RVT:10a}
\end{align}
Then by $w^*_n\in\partial f(\beta_n a_n)$, we have
\begin{align}
&\langle \beta_n a_n-y, w^*_n-y^*\rangle\geq0,\quad \forall y\in
\rho U_X, y^*\in \partial f(y), n\in\NN\nonumber\\
&\Rightarrow  \langle \beta_n a_n, w^*_n\rangle-\langle y, w^*_n\rangle
+\langle \beta_n a_n-y, -y^*\rangle\geq0,\quad\forall y\in
\rho U_X, y^*\in \partial f(y),  n\in\NN\nonumber\\
&\Rightarrow   \langle \beta_n a_n, w^*_n\rangle-
\langle y, w^*_n\rangle\geq\langle \beta_n a_n-y, y^*\rangle,\quad\forall y\in
\rho U_X, y^*\in \partial f(y),  n\in\NN\nonumber\\
&\Rightarrow \langle \beta_n a_n, w^*_n\rangle-
\langle y, w^*_n\rangle\geq -(\|\beta_n a_n\|+\rho) M, \quad\forall y\in \rho
U_X,  n\in\NN\quad\text{(by \eqref{RVT:10a})}\nonumber\\
&\Rightarrow \langle \beta_n a_n, w^*_n\rangle
\geq \langle y, w^*_n\rangle -(\|\beta_n a_n\|+\rho) M, \quad\forall y\in \rho
U_X,  n\in\NN\nonumber\\
&\Rightarrow \langle \beta_n a_n, w^*_n\rangle
\geq \rho\|w^*_n\|-(\|\beta_n a_n\|+\rho) M, \quad \forall n\in\NN\nonumber\\
&\Rightarrow \langle \beta_n a_n, \tfrac{w^*_n}{\|w^*_n\|}\rangle
\geq\rho-\frac{(\|\beta_n a_n\|+\rho) M}{\|w^*_n\|}, \quad \forall n\in\NN.\label{FCG:3}
\end{align}

Combining \eqref{LSD:11} and \eqref{FCGG:LSD9}, taking the limit in \eqref{FCG:3} along the subnet, we obtain
\begin{align}
\langle \beta z, w^*_{\infty}\rangle
\geq\rho.\label{LSD:30}\end{align}
Then we have $\beta\neq0$ and thus $\beta>0$. Then by \eqref{LSD:30},
\begin{align}
\langle  z, w^*_0\rangle
\geq\tfrac{\rho}{\beta}>0.\label{LSD:34}\end{align}

By \eqref{LSD:15} and $\tfrac{z^*}{\|w^*_n\|}\rightarrow 0$, we have
\begin{align}
\frac{w^*_n}{\|w^*_n\|}+\frac{v^*_n}{\|w^*_n\|}\rightarrow 0.
\label{LSD:31}
\end{align}
By\eqref{FCGG:LSD9}, taking the weak$^{*}$ limit in \eqref{LSD:31} along the subnet, we obtain
\begin{align}
\frac{v^*_\nu}{\|w^*_\nu\|}\weakstarly -w^*_{\infty}.
\label{LSD:32}
\end{align}

Dividing by $\|w^*_n\|$ on the both sides of \eqref{LSD:21}, we get that
\begin{align}
 \frac{\lambda}{\|w^*_n\|}
 <
 \langle z-\beta_n a_n, \frac{v^*_n}{\|w^*_n\|}\rangle-
 \frac{\langle \beta_n z-\beta_n a_n, z^*\rangle}{\|w^*_n\|}.\label{LSD:35}
 \end{align}
Combining \eqref{LSD:11}, \eqref{LSD:8} and \eqref{LSD:32},
taking the limit in \eqref{LSD:35} along the subnet, we obtain
\begin{align}
\langle z-\beta z, -w^*_{\infty}\rangle\geq0.\label{LSD:37}
 \end{align}
By \eqref{LSD:12} and \eqref{LSD:37},
\begin{align}
\langle z, -w^*_{\infty}\rangle\geq0,\label{LSD:38}\end{align}
which contradicts \eqref{LSD:34}.

Altogether $ z\in\overline{\dom\partial f}=\overline{\dom f}$.

Next, we show that
\begin{align}
F_{A+\partial f}(t z,tz^*)\geq t^2\langle z,z^*\rangle,
\quad\forall t\in\left]0,1\right[.\label{See:10}\end{align}
Let $t\in\left]0,1\right[$.
By  $0\in\inte\dom f$
and \cite[Theorem~1.1.2(ii)]{Zalinescu}, we have
\begin{align}
tz\in\inte\dom f\label{ReAu:1}.
\end{align}
By Fact~\ref{pheps:1},
\begin{align}tz\in\inte\dom \partial f.
\end{align}

Set
 \begin{align}
H_n= t z+\tfrac{1}{n}U_X,\quad \forall n\in\NN\label{FD:1}.
\end{align}
Since $\dom A$ is a linear subspace, $tz\in\overline{\dom A}\backslash {\dom A}$ by \eqref{LSD:2}.
Then $H_n\cap \dom A\neq\varnothing$.
Since $(tz, t z^*)\notin\gra A$ and $tz\in H_n$,
$A$ is of type (FPV) by Fact~\ref{f:referee01},
 there exists $(b_n,b^*_n)\in\gra A$
such that $b_n\in H_n$  and
\begin{align}
\langle t z,b^*_n\rangle+\langle b_n,t z^*\rangle-
\langle b_n,b^*_n\rangle>t^2\langle z,z^*\rangle,\quad \forall n\in\NN.\label{see:20}
\end{align}
Since $t z\in\inte\dom \partial f$ and $b_n\rightarrow t z$, by Fact~\ref{pheps:11},
 there exist $N\in\NN$ and  $K>0$ such that
\begin{align}b_n\in\inte\dom \partial f \quad\text{and}\quad \sup_{v^*\in \partial f(b_n)}\| v^*\|
\leq K,\quad \forall n\geq N.\label{See:1a2}
\end{align}
Hence
\begin{align}
&F_{A+\partial f}(t z,t z^*)\nonumber\\
&\geq\sup_{\{ c^*\in \partial f(b_n)\}}\left[\langle b_n,t z^*\rangle
+\langle t z,b_n^*\rangle-\langle b_n,b_n^*\rangle
+\langle t z-b_n, c^*\rangle \right],\quad \forall n\geq N\nonumber\\
&\geq\sup_{\{ c^*\in \partial f(b_n)\}}\left[t^2\langle z,z^*\rangle
+\langle t z-b_n,c^*\rangle \right],\quad \forall n\geq N
\quad\text{(by \eqref{see:20})}\nonumber\\
&\geq\sup\left[t^2\langle z,z^*\rangle
-K\|t z-b_n\| \right],\quad \forall n\geq N\quad\text{(by \eqref{See:1a2})}\nonumber\\
&\geq t^2\langle z,z^*\rangle\quad\text{(by $b_n\rightarrow t z$)}.\label{See:1a3}
\end{align}
Hence $
F_{A+\partial f}(t z,t z^*)\geq t^2\langle z,z^*\rangle$.

We have verified that \eqref{See:10} holds.
Since $(0,0)\in\gra A\cap\gra \partial f$, we obtain
$(\forall (d,d^*)\in\gra (A+\partial f))$ $\langle d,d^*\rangle\geq0$.
Thus, $F_{A+\partial f}(0,0)=0$.
Now define
\begin{align*}
j\colon \left[0,1\right]\rightarrow\RR \colon  t\rightarrow F_{A+\partial f}(tz,tz^*).
\end{align*}
Then $j$ is continuous on $\left[0,1\right]$ by \eqref{See:1a4} and \cite[Proposition~2.1.6]{Zalinescu}.
From \eqref{See:10}, we obtain
\begin{align}
F_{A+\partial f}(z,z^*)=\lim_{t\rightarrow1^{-}}
F_{A+\partial f}(t z,t z^*)\geq \lim_{t\rightarrow1^{-}}
\langle t z,t z^*\rangle=\langle z,z^*\rangle,
\end{align}
which contradicts \eqref{See:1a4}. Hence \begin{align}
F_{A+\partial f}(z,z^*)\geq \langle z,z^*\rangle.
\end{align}
Therefore,
 \eqref{e0:ourgoal} holds, and $A+\partial f$ is maximally monotone.
\end{proof}

\begin{remark}
In Theorem~\ref{PGV:1}, when $\inte\dom A\cap\dom\partial f\neq\varnothing$, we have $\dom A=X$ since $\dom A$ is a linear subspace.
 Therefore, we can verify
 the maximal monotonicity of $A+\partial f$ by  the Verona-Verona result
 (see  \cite[Corollary~2.9(a)]{VV}, \cite[Theorem~53.1]{Si2} or \cite[Corollary~3.7]{Yao3}).

\end{remark}

\begin{corollary}\label{domain:L1}
Let $A:X\To X^*$ be  a maximally monotone linear relation, and $f:X\rightarrow\RX$ be a
proper lower semicontinuous convex function with
 $\dom  A\cap\inte\dom\partial f\neq\varnothing$.
Then $A+\partial f$ is of type  $(FPV)$.

\end{corollary}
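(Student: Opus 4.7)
The plan is to derive this from the main theorem (Theorem~\ref{PGV:1}) combined with the Simons/Verona--Verona criterion for type (FPV) given in Fact~\ref{f:referee02a}. Theorem~\ref{PGV:1} already yields maximal monotonicity of $T:=A+\partial f$, so what remains is to verify that for every closed convex $C\subseteq X$ satisfying $\dom T\cap\inte C\neq\varnothing$, the sum $T+N_C$ is again maximally monotone.

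Fix such a $C$ and set $g:=f+\iota_C$; this is a proper lower semicontinuous convex function. The idea is to rewrite $T+N_C$ as $A+\partial g$ via Fact~\ref{f:F4} (Rockafellar's subdifferential sum rule) and then reapply Theorem~\ref{PGV:1} to the pair $(A,g)$. Fact~\ref{f:F4} will demand a point in $\dom f\cap\dom\iota_C$ at which $\iota_C$ is continuous, i.e., a point of $\dom f\cap\inte C$, and Theorem~\ref{PGV:1} will demand the stronger condition $\dom A\cap\inte\dom\partial g\neq\varnothing$. Establishing the latter is the crux of the argument.

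The key manipulation is as follows. The hypothesis $\dom T\cap\inte C\neq\varnothing$ furnishes $p\in\dom A\cap\dom\partial f\cap\inte C$, but $p$ need not lie in $\inte\dom f$. Separately, the corollary's standing assumption provides $q\in\dom A\cap\inte\dom\partial f$, and one has $\inte\dom\partial f=\inte\dom f$ by Fact~\ref{pheps:1}. Because $\dom A$ is a linear subspace and $\dom f$ is convex with $p\in\dom f$ and $q\in\inte\dom f$, the segment $y_t:=(1-t)p+tq$ will lie in $\dom A\cap\inte\dom f$ for every $t\in\zeroun$, by the standard line-segment principle \cite[Theorem~1.1.2(ii)]{Zalinescu}. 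Since $p\in\inte C$ and $y_t\to p$, for $t>0$ sufficiently small $y_t$ will remain in $\inte C$, placing $y_t\in\dom A\cap\inte\dom f\cap\inte C\subseteq\dom A\cap\inte\dom g$; a second application of Fact~\ref{pheps:1} to $g$ then puts $y_t$ in $\dom A\cap\inte\dom\partial g$.

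Once this point is secured, Theorem~\ref{PGV:1} will give maximal monotonicity of $A+\partial g$; Fact~\ref{f:F4}, invoked at $y_t\in\dom f\cap\inte C$, will yield $\partial g=\partial f+N_C$; and consequently $T+N_C=A+\partial f+N_C=A+\partial g$ is maximally monotone. Fact~\ref{f:referee02a} will then deliver the conclusion that $T=A+\partial f$ is of type (FPV). The main obstacle is precisely the constraint-qualification bookkeeping in the third paragraph: transferring an interior point of $C$ that merely lies in $\dom\partial f$ to one that lies in $\inte\dom f\cap\inte C$, which requires simultaneously exploiting the linear-subspace structure of $\dom A$ and the convexity of $\dom f$.
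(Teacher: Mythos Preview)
Your proof is correct and follows essentially the same route as the paper's: apply Fact~\ref{f:referee02a}, set $g=f+\iota_C$, use the subdifferential sum rule Fact~\ref{f:F4}, and reduce to Theorem~\ref{PGV:1} for the pair $(A,g)$ after producing a point of $\dom A\cap\inte\dom\partial g$ via a convex combination of a point in $\dom(A+\partial f)\cap\inte C$ with a point in $\dom A\cap\inte\dom\partial f$. The paper writes out the $\delta$--ball bookkeeping explicitly where you invoke the line-segment principle \cite[Theorem~1.1.2(ii)]{Zalinescu}, but the argument is the same.
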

\begin{proof} By Theorem~\ref{PGV:1}, $A+\partial f$ is maximally monotone.
Let $C$ be a nonempty closed convex subset of $X$,
and suppose that $\dom (A+\partial f) \cap \inte C\neq \varnothing$.
Let $x_1\in \dom A \cap \inte\dom \partial f$ and $x_2\in \dom (A+\partial f) \cap \inte C$.
Thus, there exists   $\delta>0$ such that $x_1+\delta U_X\subseteq \dom f$
 and $x_2+\delta U_X\subseteq C$.
Then for small enough $\lambda\in\left]0,1\right[$, we have
 $x_2+\lambda (x_1-x_2)+\tfrac{1}{2}\delta U_X\subseteq C$.
Clearly, $x_2+\lambda(x_1-x_2)+\lambda\delta U_X\subseteq \dom f$.
Thus $x_2+\lambda(x_1-x_2)+\tfrac{\lambda\delta}{2}
U_X\subseteq \dom f\cap C=\dom (f+\iota_C) $.
By Fact~\ref{pheps:1},
  $x_2+\lambda(x_1-x_2)+\tfrac{\lambda\delta}{2} U_X\subseteq \dom \partial
(f+\iota_C)$.
Since $\dom A$ is convex, $x_2+\lambda(x_1-x_2)\in\dom A$
 and  $x_2+\lambda(x_1-x_2)\in\dom A\cap\inte\left[\dom \partial
(f+\iota_C)\right]$.
By Fact~\ref{f:F4} , $\partial f+N_C=\partial {(f+\iota_ C)}$.
Then, by Theorem~\ref{PGV:1} (applied to $A$ and $f+\iota_ C$),
$A+\partial f+N_C=A+\partial {(f+\iota_ C)}$ is maximally monotone.
By Fact~\ref{f:referee02a},   $A+\partial f$ is of type  $(FPV)$.
\end{proof}

\begin{corollary}[Simons]
\emph{(See \cite[Theorem~46.1]{Si2}.)}
Let $A:X\To X^*$ be a maximally monotone linear relation.
Then $A$ is of type (FPV).
\end{corollary}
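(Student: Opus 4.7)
The plan is to deduce the corollary as an immediate specialization of Corollary~\ref{domain:L1}, applied with the trivial choice $f\equiv 0$. This function is proper, lower semicontinuous, and convex, and one has $\partial f(x)=\{0\}$ for every $x\in X$, so $\dom\partial f=\inte\dom\partial f=X$. Consequently the qualification $\dom A\cap\inte\dom\partial f\neq\varnothing$ reduces to $\dom A\neq\varnothing$, which is automatic because $A$ is maximally monotone.

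With this choice of $f$, the sum operator $A+\partial f$ has exactly the same graph as $A$. Corollary~\ref{domain:L1} therefore yields that $A=A+\partial f$ is of type (FPV), which is the desired conclusion.

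There is essentially no obstacle here; the statement is a direct specialization of Corollary~\ref{domain:L1}. The only subtle point worth mentioning is that Fact~\ref{f:referee01} (Simons' original statement of the conclusion) was invoked \emph{inside} the proof of Theorem~\ref{PGV:1}, and hence inside that of Corollary~\ref{domain:L1}. The final corollary is therefore not providing a new independent derivation of Simons' theorem; rather, its role is to record that the framework developed in this paper recovers that result as a special case of the more general sum theorem.
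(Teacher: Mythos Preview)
Your proposal is correct and essentially identical to the paper's proof: the paper takes $f=\iota_X$, which is precisely the zero function you chose, and then invokes Corollary~\ref{domain:L1} to conclude that $A=A+\partial f$ is of type (FPV). Your closing remark about the circularity---that Fact~\ref{f:referee01} was already used in the proof of Theorem~\ref{PGV:1}, so this corollary merely records that the framework recovers Simons' result rather than giving an independent derivation---is accurate and worth noting, though the paper does not make this explicit.
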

\begin{proof}
Let $f=\iota_X$.  Then by Corollary~\ref{domain:L1},
 we have that  $A=A+\partial f$ is type of (FPV).
\end{proof}
\subsection{An example and comments}
\begin{example}
\label{ex:main}
Suppose that $X=L^1[0,1]$ with $\|\cdot\|_1$, let $$D=\menge{x\in X}{\text{$x$ is absolutely
continuous}, x(0)=0, x'\in X^*},$$
and set
\begin{equation*}
A\colon X\To X^*\colon
x\mapsto \begin{cases}
\{x'\}, &\text{if $x\in D$;}\\
\varnothing, &\text{otherwise.}
\end{cases}
\end{equation*}
Define $f:X\rightarrow\RX$ by
\begin{align}f(x)=
\begin{cases}\frac{1}{1-\|x\|_1^2},\,&\text{if}\, \|x\|<1;\\
+\infty,&\text{otherwise}.
\end{cases}
\end{align}
Clearly, $X$ is a nonreflexive Banach space.
By Phelps and Simons' \cite[Example~4.3]{PheSim},
$A$ is an at most single-valued maximally monotone linear relation
with proper dense domain, and $A$ is neither symmetric nor skew.
Since $g(t)=\tfrac{1}{1-t^2}$ is convex on the $\left[0,1\right[$
 (by $g''(t)=2(1-t^2)^{-2}+8t^2(1-t^2)^{-3}\geq 0, \forall t\in\left[0,1\right[$),
 $f$ is convex. Clearly, $f$ is proper lower semicontinuous, and by
  Fact~\ref{pheps:1}, we have
\begin{align}\dom f=U_X=\inte\dom f=\dom\partial f
=\inte\left[\dom \partial f\right].\label{LSD:039}\end{align}
Since $0\in\dom A\cap\inte\left[\dom \partial f\right]$,
 Theorem~\ref{PGV:1} implies that $A+\partial f$ is maximally monotone.
To the best of our knowledge, the maximal monotonicity of
$A+\partial f$ cannot be deduced from
any previously known result.
\end{example}

\begin{remark} To the best of our knowledge,
the results in \cite{VV,Voi1,ZalVoi, BWY9,Yao3}
 can not verify the maximal monotonicity in Example~\ref{ex:main}.

\begin{itemize}
\item Verona and Verona
(see \cite[Corollary~2.9(a)]{VV} or \cite[Theorem~53.1]{Si2} or \cite[Corollary~3.7]{Yao3})
 showed the following:
``Let $f: X\to \RX$ be proper, lower semicontinuous, and convex,
let $A: X\To X^*$ be maximally monotone, and suppose that
$\dom A=X$.
Then $\partial f +A$ is maximally monotone.''
The $\dom A$ in Example~\ref{ex:main} is proper dense, hence
$A+\partial f$ in Example~\ref{ex:main} cannot be deduced from the Verona -Verona result.

\item In \cite[Theorem~5.10($\eta$)]{Voi1}, Voisei showed that the sum theorem
 is true when $\dom A\cap\dom B$ is closed, $\overline{\dom A}$ is convex
and Rockafellar's constraint qualification holds.
 In Example~\ref{ex:main}, $\dom A\cap\dom \partial f$ is not closed by \eqref{LSD:039}.
Hence we cannot apply for \cite[Theorem~5.10($\eta$)]{Voi1}.

\item In \cite[Corollary~4]{ZalVoi}, Voisei and Z\u{a}linescu showed that
 the sum theorem is true when $^{ic}(\dom A)\neq\varnothing,
^{ic}(\dom B)\neq\varnothing$ and $0\in^{ic}\left[\dom A-\dom B\right]$.
 Since the $\dom A$ in Example~\ref{ex:main} is a proper dense linear subspace,
$^{ic}(\dom A)=\varnothing$. Thus  we cannot apply for \cite[Corollary~4]{ZalVoi}.
(Given a set $C\subseteq X$, we define $^{ic}C$ by
\begin{equation*}
^{ic}C=\begin{cases}^{i}C,\,&\text{if $\aff C$ is closed};\\
\varnothing,\,&\text{otherwise},
\end{cases}\end{equation*} where $^{i}C$ \cite{Zalinescu}
 is the \emph{intrinsic core} or \emph{relative algebraic interior} of $C$,  defined by
$^{i}C=\{a\in C\mid \forall x\in \aff(C-C),
\exists\delta>0, \forall\lambda\in\left[0,\delta\right]:
a+\lambda x\in C\}.$)

\item In \cite{BWY9},
it was shown that
the sum theorem is true when $A$ is a linear relation, $B$ is the subdifferential operator
of a proper lower semicontinuous sublinear function, and Rockafellar's
constraint qualification holds.
Clearly, $f$ in Example~\ref{ex:main} is not sublinear. Then we cannot apply for it.
Theorem~\ref{PGV:1} truly generalizes \cite{BWY9}.

\item In \cite[Corollary~3.11]{Yao3},
it was shown that
the sum theorem is true when $A$ is a linear relation, $B$ is a maximally monotone
operator satisfying  Rockafellar's
constraint qualification and $\dom A\cap\overline{\dom B}\subseteq\dom B$.
In Example~\ref{ex:main}, since $\dom A$ is a linear subspace, we can take $x_0\in\dom A$ with $
\|x_0\|=1$. Thus, by \eqref{LSD:039}, we have that \begin{align}
x_0\in\dom A\cap\overline{U_X}=\dom A\cap\overline{\dom \partial f}\quad\text{but}\quad  x_0\not\in U_X=\dom \partial f.
\end{align}
Thus $\dom A\cap\overline{\dom \partial f}\nsubseteqq \dom \partial f$ and thus we cannot apply
\cite[Corollary~3.11]{Yao3} either.
\end{itemize}
\end{remark}

\begin{problem}
Let $A:X\To X^*$ be a maximally monotone linear relation,
and let $f:X\To \RX$ be a proper lower semicontinuous convex function.
Assume that \begin{align}\bigcup_{\lambda>0} \lambda
\big(P_X(\dom F_A)-P_X(\dom F_{\partial f})\big)=X.\end{align}  Is
$A+\partial f$ necessarily maximally monotone $?$
\end{problem}

\end{document}